\documentclass[11pt]{amsart}
%%%%%%%%%%%%%%%%%%%%%%%%%%%%%%%%%%%%%%%%%%%%%%%%%%%%%%%%%%%%%%%%%%%%%%%%%%%%%%%%%%%%%%%%%%%%%%%%%%%%%%%%%%%%%%%%%%%%%%%%%%%%%%%%%%%%%%%%%%%%%%%%%%%%%%%%%%%%%%%%%%%%%%%%%%%%%%%%%%%%%%%%%%%%%%%%%%%%%%%%%%%%%%%%%%%%%%%%%%%%%%%%%%%%%%%%%%%%%%%%%%%%%%%%%%%%%%%%%%%%%%%%%%%%%%%%%%%%%%%%%%%%%%%%
\usepackage{amssymb}
\usepackage{amsfonts} 
\usepackage{amsmath}                           
\usepackage{amsthm}

\usepackage{hyperref} 
\usepackage[verbose]{geometry}
\geometry{top=4cm,bottom=4cm,left=3cm,right=2cm,headsep=1cm} 

\setcounter{MaxMatrixCols}{10}

\usepackage[english]{babel}
\usepackage[latin1]{inputenc}
\usepackage{amsmath,amssymb,amsthm,epsfig}
\usepackage{eufrak}
\usepackage{color}
\usepackage{mathrsfs}
\usepackage{latexsym}

\newcommand{\R}{\Bbb{R}}

\theoremstyle{plain}
   
\newtheorem{teo}{Theorem}[section]

\newtheorem{defi}{Definition}[section]

\newtheorem{cor}{Corollary}[section]
\theoremstyle{definition}
\newtheorem{exe}{Example}[section]
\newtheorem{obs}{Remark}[section]
\begin{document}
\title{The Poincar\'e Problem for foliations on Compact Toric Orbifolds}

\author{Miguel Rodr\'iguez Pe\~na}
\address{A. M. Rodr\'iguez\\
DEMAT - UFSJ \\
Departamento de Matem\'atica e Estad\'istica\\
S\~ao Jo\~ao del-Rei MG, Brazil.}
\email{miguel.rodriguez.mat@ufsj.edu.br}

\date\today

\begin{abstract}
We give an optimal upper bound of the degree of quasi-smooth hypersurfaces which are invariant by a one-dimensional holomorphic foliation on a compact toric orbifold, i.e. on a complete simplicial toric variety. This bound depends only on the degree of the foliation and of the degrees of the toric homogeneous coordinates.
\end{abstract}

\maketitle

%%%%%%%%%%%%%%%%%%%%%%%%%%%%%%%%%%%%%%%%%%%%%%%%%%%%%%%%%%%%%%%%%%%%%%%%%%%%%%%%%%%%%%%%%%%%%%%%%%%%%%%%%%%%%%%%%%%%%%%%%%%%%%%%%%%%%%%%%%%%%%%%%%%%%%%%%%%%%%%%%%%%%%%%%%%%%%%%%%%%%%%%%%%%%%%%%%%%%%%%%%%%%%%%%%%%%%%%%%%%%%%%%%%%%%%%%%%%%%%%%%%%%%%%%%%%%%%%%%%%%%%%%%%%%%%%%%%%%%%%%%%%%%%%

\section{Introduction}

Henri Poincar\'e studied in \cite{Poin} the problem to decide whether a holomorphic
foliation $\mathcal{F}$ on the complex projective plane $\mathbb{P}^2$ admits a rational first integral. Poincar\'e
observed that, in order to solve this problem, it is sufficient to find a bound for the degree of the generic $\mathcal{F}$-invariant curves.
Determining such a bound is known as the \textit{Poincar\'e problem}. Although it is well-known that such a bound does not exist in
general, under certain hypotheses, there are several works about Poincar\'e problem and its generalizations; see for instance 
\cite{BruMe}, \cite{Bru}, \cite{Can}, \cite{Alc}, \cite{EstKle}, \cite{Gal}, \cite{Pere} and \cite{So}.

Toric varieties form an important class of examples in algebraic geometry. Furthermore, its geometry is fully determined by the combinatorics of its associated fan, which often makes computations far more tractable. 
Recently, M. Corr\^ea presented a result of Darboux-Jouanolou-Ghys's type for one-dimensional holomorphic foliations on toric varieties; see for instance \cite{Darb} and \cite{Mau}. 
In this paper, we consider a one-dimensional holomorphic foliation $\mathcal{F}$ on a compact toric orbifold $\mathbb{P}_{\Delta}$, i.e., a compact toric variety with at most quotient singularities. 
A quasi-homogeneous hypersurface $V$ of $\mathbb{P}_{\Delta}$ is invariant by $\mathcal{F}$, if 
$V\setminus\mathrm{Sing}(\mathcal{F})\cup\mathrm{Sing}(V)$ is a union of leaves of $\mathcal{F}$.
Note that similarly to the complex projective space, one can consider the Poincar\'e problem for holomorphic one-dimensional foliations on a compact toric orbifold. It is possible since we can consider homogeneous coordinates in a toric variety and therefore we can define the notion of degree of a foliation and of an invariant quasi-smooth hypersurface. In order to provide a solution to Poincar\'e problem, i.e. to find a bound for the degree of a quasi-smooth hypersurface invariant by $\mathcal{F}$,
we give a normal form for quasi-homogeneous vector fields with a quasi-smooth hypersurface invariant by $\mathcal{F}$ on a compact toric orbifold; see for instance Theorems \ref{Mig1} and \ref{Mig4}. As a consequence of Theorems \ref{Mig1} and \ref{Mig4}, we will see that the solution to Poincar\'e problem on compact toric orbifolds is optimal. It is important to point out here that our main results improve and generalize the results obtained in \cite{MiMaFa} and \cite{CorMa} about the Poincar\'e problem in weighted projective spaces. Also we generalize
the results of \cite{MaCor}, where the authors studied the Poincar\'e problem for foliations on multiprojective complex spaces. 
Moreover, we give a bound for the degree of a quasi-smooth hypersurface invariant by $\mathcal{F}$ on rational normal scrolls, and for a compact toric orbifold surface with Weil divisor class group having torsion.
Finally, in Section \ref{App}, we will present families of examples of one-dimensional holomorphic foliations to show that our results are optimals and an example that shows that our hypotheses are necessary. 

Let $\mathbb{P}_{\Delta}$ be an $n$-dimensional compact toric orbifold, determined by a fan $\Delta$ in $N\simeq\mathbb{Z}^n$. 
As $\Delta(1)$ spans $N_{\mathbb{R}}=N\otimes_{\mathbb{Z}}\mathbb{R}\simeq\mathbb{R}^n$, we have 
$\mathbb{P}_{\Delta}$ is a geometric quotient $(\mathbb{C}^{n+r}-{\mathcal{Z}})/G$, where the group 
$G = Hom_{\mathbb{Z}}(\mathcal{A}_{n-1}(\mathbb{P}_{\Delta}), \mathbb{C}^*)$ acts on $\mathbb{C}^{n+r}$, 
$\mathcal{A}_{n-1}(\mathbb{P}_{\Delta})$ is the Weil divisor class group of $\mathbb{P}_{\Delta}$ and $\mathcal{Z}$ is an algebraic set of codimension at least two in $\mathbb{C}^{n+r}$; see \cite[Cox]{Cox}. Consider $\mathcal{T}\mathbb{P}_{\Delta}=
\mathcal{H}om(\Omega_{\mathbb{P}_{\Delta}}^1,\mathcal{O}_{\mathbb{P}_{\Delta}})$ the Zariski tangent sheaf of $\mathbb{P}_{\Delta}$.
Since $\mathbb{P}_{\Delta}$ is a complex orbifold then $\mathcal{T}\mathbb{P}_{\Delta} \simeq i_{\ast}
\mathcal{T}\mathbb{P}_{\Delta, \,\mathrm{reg}}$, where $i:\mathbb{P}_{\Delta, \,\mathrm{reg}} \rightarrow \mathbb{P}_{\Delta}$
is the inclusion of the regular part $\mathbb{P}_{\Delta, \,\mathrm{reg}}= \mathbb{P}_{\Delta} \setminus \mathrm{Sing}(\mathbb{P}_{\Delta})$ and $\mathcal{T}\mathbb{P}_{\Delta, \,\mathrm{reg}}$ is the tangent sheaf of $\mathbb{P}_{\Delta, \,\mathrm{reg}}$; see 
\cite[Appendix A.2]{Cox3}. A singular one-dimensional holomorphic foliation $\mathcal{F}$ on $\mathbb{P}_{\Delta}$ is a global section of 
$\mathcal{T}\mathbb{P}_{\Delta}\otimes\mathcal{L}$, where $\mathcal{L}$ is an invertible sheaf on $\mathbb{P}_{\Delta}$ 
and $\mathrm{codim}\,\mathrm{Sing}(\mathcal{F})\geq 2$.
Consider the homogeneous coordinate ring $\mathrm{S}=\mathbb{C}[z_1,\ldots,z_{n+r}]$ of $\mathbb{P}_{\Delta}$ 
and let $f\in\mathrm{S}$ be a quasi-homogeneous polynomial; see Subsection $\ref{coxhom}$.
We say that $V=\left\{f=0\right\}\subset\mathbb{P}_{\Delta}$ is a \textit{quasi-smooth hypersurface} if its tangent cone $\left\{f = 0\right\}$ 
on $\mathbb{C}^{n+r} \setminus \mathcal{Z}$ is smooth, and it is said to be \textit{strongly quasi-smooth hypersurface} if its tangent cone 
$\left\{f = 0\right\}$ on $\mathbb{C}^{n+r} \setminus \left\{0\right\}$ is smooth. Note that $V$ is quasi-smooth if and only if $V$ is a suborbifold of $\mathbb{P}_{\Delta}$, and both definitions coincide when the rank of $\mathcal{A}_{n-1}(\mathbb{P}_{\Delta})$ is one;
see for instance \cite{Cox1} and \cite{Cox}. Let $X$ be a quasi-homogeneous vector field which defines $\mathcal{F}$ in homogeneous coordinates. Then $V=\left\{f=0\right\}\subset\mathbb{P}_{\Delta}$ is invariant by $\mathcal{F}$ if $X(f) = g\cdot f$ for some
quasi-homogeneous polynomial $g$. 

The Weil divisor class group $\mathcal{A}_{n-1}(\mathbb{P}_{\Delta})$ is a finitely generated abelian group of rank $r$. By the fundamental theorem of finitely generated abelian groups, we have
\begin{equation}\label{eq00}
\begin{array}{ccccccccc}
\mathcal{A}_{n-1}(\mathbb{P}_{\Delta})\simeq\mathbb{Z}^{\,r} \oplus \mathbb{Z}_{(p_1)^{\lambda_1}} \oplus \cdots \oplus \mathbb{Z}_{(p_m)^{\lambda_m}},
\end{array}
\end{equation}
where $p_i$ are primes, not necessarily distinct, and $\lambda_i$ are positive integers. The 
direct sum is unique except for possible rearrangement of the factors. Suppose that the homogeneous coordinate ring $\mathrm{S}$ of 
$\mathbb{P}_{\Delta}$ has the following property: there is a positive integer number 
$1 \leq k \leq r$ such that
\begin{equation}\label{eq0}
\begin{array}{ccccccccc}
(\deg \mathrm{P})_k \geq 0\,\,\textup{for all}\,\,\mathrm{P} \in \mathrm{S},
\end{array}
\end{equation}
where $(\deg \mathrm{P})_k$ denotes the $k$-th integer coordinate of $\deg\mathrm{P}\in\mathcal{A}_{n-1}(\mathbb{P}_{\Delta})$,
i.e. the $k$-th component of the multidegree $\deg\mathrm{P}$ in \eqref{eq00}. First, we consider the question of bounding the degree of strongly quasi-smooth hypersurfaces which are invariant by a one-dimensional holomorphic foliation on a compact toric orbifold:

\begin{teo} Let $\mathbb{P}_{\Delta}$ be a complete simplicial toric variety of dimension $n$, with homogeneous coordinates 
$z_1,\ldots,z_{n+r}$. Let $\mathcal{F}$ be a one-dimensional holomorphic foliation on $\mathbb{P}_{\Delta}$ and let $X$ be a quasi-homogeneous vector field which defines $\mathcal{F}$ in homogeneous coordinates. Let $V=\left\{f=0\right\}\subset\mathbb{P}_{\Delta}$ be a strongly quasi-smooth hypersurface invariant by 
$\mathcal{F}$. Then 
$$\deg(V)_k \leq \deg(\mathcal{F})_k + \max_{1\leq i<j\leq n+r} \big\{\deg(z_i)_k + \deg(z_j)_k\big\},$$
for each $1\leq k \leq r$ as in the hypothesis $(\ref{eq0})$.
\end{teo}

There is a situation in which the above theorem is valid for a quasi-smooth hypersurface 
$V=\left\{f=0\right\}\subset\mathbb{P}_{\Delta}$ with
$$\left\{0\right\} \subsetneq \mathrm{Sing}(V) \subset \mathcal{Z}.$$
Here we are going to consider a variant of the previous theorem for quasi-smooth hypersurfaces on a compact toric orbifold:

\begin{teo} Let $\mathbb{P}_{\Delta}$ be a complete simplicial toric variety of dimension $n$, with homogeneous coordinates 
$z_1,\ldots,z_{n+r}$. Let $\mathcal{F}$ be a one-dimensional holomorphic foliation on
$\mathbb{P}_{\Delta}$ and let $X$ be a quasi-homogeneous vector field which defines $\mathcal{F}$ in homogeneous coordinates. 
Let $V=\left\{f=0\right\}\subset\mathbb{P}_{\Delta}$ be a quasi-smooth hypersurface. 
Suppose there are integer numbers $1\leq i_1<\cdots<i_k\leq n+r$ such that 
\begin{enumerate}
 	\item [(i)] there is a regular subsequence 
$\left\{\frac{\partial f}{\partial z_{i_1}},\dots,\frac{\partial f}{\partial z_{i_k}}\right\}\subset
\left\{\frac{\partial f}{\partial z_{1}},\dots,\frac{\partial f}{\partial z_{n+r}}\right\}$ and
  \item [(ii)] there is a radial vector field $R_{i_1,\ldots,i_k}=\sum_{j=1}^{k} a_{i_j}z_{i_j}\frac{\partial}{\partial z_{i_j}}$ such that
$i_{R_{i_1,\ldots,i_k}}(df)=\theta(\alpha)\cdot f$, where $\alpha\in\mathcal{A}_{n-1}(\mathbb{P}_{\Delta})$ is the degree of $f$ and 
$\theta(\alpha)$ is a constant; see Subsection $\ref{Qsh}$.
\end{enumerate}
Moreover assume that, in homogeneous coordinates, $X=X_1+X_2$, where $X_1=\sum_{j=1}^{k} P_{i_j}\frac{\partial}{\partial z_{i_j}}$
leaves $V$ invariant and such that $X_1\notin \mathrm{Lie}(G)$. Then 
$$\deg(V)_{\ell} \leq \deg(\mathcal{F})_{\ell} + \max_{1\leq j_1<j_2\leq k} \big\{\deg(z_{i_{j_1}})_{\ell}
+ \deg(z_{i_{j_2}})_{\ell}\big\},$$
for each $1\leq \ell \leq r$ as in the hypothesis $(\ref{eq0})$. Here, $\mathrm{Lie}(G)$ is defined in Subsection $\ref{Qsh}$.
\end{teo}

Note that the property $($i$)$ is equivalent to
$$\mathrm{codim}\left( \left\{\frac{\partial f}{\partial z_{i_1}}=\cdots=\frac{\partial f}{\partial z_{i_s}}=0\right\}\right)
=s,\,s\leq k;$$
for more details see \cite{Grif}.
  
\bigskip

%\section{Koszul complex}
%Let $\left\{\frac{\partial}{\partial z_1},\dots,\frac{\partial}{\partial z_m}\right\}$ be the standard basis for 
%$\mathbb{C}^m$ and $f_1,\dots,f_m \in \mathbb{C}[z]=\mathbb{C}[z_1,\dots,z_m]$.
%Here we will build the Koszul complex associated with $f_1,\dots,f_m$, and we will state lemma \ref{lem} that will be fundamental in the demonstration of our main result. Set $E_k = \mathbb{C}[z]\otimes_{\mathbb{C}}\wedge^k \mathbb{C}^m$ and define  
%$$\partial_k: E_k \rightarrow E_{k-1},$$
%by $\mathbb{C}[z]$-linearity and the usual boundary formula 
%$$\partial_k(\frac{\partial}{\partial z_{i_1}}\wedge\dots\wedge\frac{\partial}{\partial z_{i_k}})
%=\sum_{j=1}^{k}(-1)^{j+1} f_{i_j} \cdot \frac{\partial}{\partial z_{i_1}}\wedge\dots\wedge
%\widehat{\frac{\partial}{\partial z_{i_j}}} \wedge\dots\wedge\frac{\partial}{\partial z_{i_k}}.$$
%For $k=1$ we set $E_0=\mathbb{C}[z]$ and $\partial_1(\frac{\partial}{\partial z_i})=f_i$. One checks that $\partial_k \circ \partial_{k+1}=0$, so one has a complex of $\mathbb{C}[z]$-modules, called Koszul complex $E_{\bullet}(f)$ for any set of functions $f=\left(f_1,\dots,f_m\right)$. Let $H_k(E_{\bullet}(f))$ be the $k$-th homology group of this complex
%$$H_k(E_{\bullet}(f))=Ker(\partial_k)/Im(\partial_{k+1}).$$

%\begin{lema} \cite{Grif} \label{lem} If $\left(f_1,\dots,f_m\right)$ is a regular sequence, then
%$$H_k(E_{\bullet}(f))=0,$$
%for $k>0$, and 
%$$H_0(E_{\bullet}(f))\simeq\mathbb{C}[z]/I(f_1,\dots,f_m).$$
%\end{lema}

\section{Generalities on toric varieties}

In this section we recall some basic definitions and results about simplicial complete toric
varieties. For more details about toric varieties see \cite{Bras}, \cite{Cox2}, \cite{Cox}, \cite{Fu} and \cite{Oda}.

Let $N$ be a free $\mathbb{Z}$-module of rank $n$ and $M =
\mathrm{Hom}(N, \mathbb{Z})$ be its dual. A subset $\sigma\subset
N_{\mathbb{R}}=N\otimes_{\mathbb{Z}} \mathbb{R}\simeq \mathbb{R}^n$ is called a
strongly convex rational polyhedral cone if there exists a finite
number of elements $v_1,\dots,v_{k}$ in the lattice $N$ such that
$$
\sigma=\{a_1 v_1+\cdots+a_k v_{k} : a_i\in \mathbb{R}, a_i\geq 0\},
$$
and $\sigma$ does not contain any straight line going through the
origin, i.e. $\sigma\cap(-\sigma)=\left\{0\right\}$.
A cone $\sigma$ is called simplicial if its
generators can be chosen to be linearly independent over $\R$. The
dimension of a cone $\sigma$ is, by definition, the dimension of a
minimal subspace of $\mathbb{R}^n$ containing $\sigma$.

Set $M_{\mathbb{R}}=M\otimes_{\mathbb{Z}} \mathbb{R}$ and 
$\left\langle\,\,,\,\right\rangle:M_{\mathbb{R}}\times N_{\mathbb{R}}\longrightarrow\mathbb{R}$ the
dual pairing. To each cone $\sigma$ we associate the dual cone $\check{\sigma}$
$$
\check{\sigma}=\{m\in M_{\mathbb{R}} : \langle m,
v \rangle \geq 0\,\,\, \forall v\in \sigma\},
$$
which is a rational polyhedral cone in $M_{\mathbb{R}}$. It
follows from Gordan's Lemma that $\check{\sigma}\cap M$ is also a finitely generated
semigroup. A subset $\tau$ of $\sigma$ is called a face and is denoted $\tau\prec\sigma$, if 
$$\tau=\sigma\cap\left\{m\right\}^{\bot}=\left\{v\in\sigma : \langle m, v \rangle = 0\right\},$$
for some $m\in\check{\sigma}$. A cone is a face of itself, other faces are called proper faces.

\begin{defi}
A non-empty collection $\Delta=\{\sigma_1,\dots,\sigma_s\}$ of
strongly convex rational polyhedral cones in
$N_\mathbb{R}\simeq \mathbb{R}^n$
is called a fan if it satisfies:
\begin{itemize}
  \item [(i)] if $\sigma \in \Delta$ and $\tau\prec\sigma$, then $\tau\in \Delta,$
  \item [(ii)] if $\sigma_i,\sigma_j\in \Delta$, then $\sigma_i\cap\sigma_j\prec
  \sigma_i$ and $\sigma_i\cap\sigma_j\prec\sigma_j.$
\end{itemize}
\end{defi}

The fan $\Delta$ is called complete if $N_\mathbb{R}=\sigma_1\cup\cdots\cup\sigma_s$.
The dimension of a fan is the maximal dimension of its cones. An
$n$-dimensional complete fan is called simplicial if all its
$n$-dimensional cones are simplicial. An affine $n$-dimensional toric variety corresponding to
$\sigma$ is the variety 
$$
\mathcal{U}_{\sigma}= \mathrm{Spec} \, \mathbb{C}[\check{\sigma}\cap M].
$$
If a cone $\tau$ is a face of $\sigma$ then $\check{\tau}\cap M$ is
a subsemigroup of $\check{\sigma}\cap M$, hence $\mathcal{U}_{\tau}$
is embedded into $\mathcal{U}_{\sigma}$ as an open subset. The
affine varieties corresponding to all cones of the fan $\Delta$ are
glued together according to this rule into the toric variety
$\mathbb{P}_{\Delta}$ associated with $\Delta$. Is possible to show
that a toric variety $\mathbb{P}_{\Delta}$ contain  a complex torus
$\mathbb{T}^n=(\mathbb{C}^*)^n$ as a Zariski open subset such that
the action of $\mathbb{T}^n$ on itself extends to an action of
$\mathbb{T}^n$ on $\mathbb{P}_{\Delta}$. A toric variety $\mathbb{P}_{\Delta}$ determined by
a complete simplicial fan $\Delta$ is a compact complex orbifold, i.e. a compact complex 
variety with at most quotient singularities. Note that $\mathbb{T}^n$, $\mathbb{C}^n$ and $\mathbb{P}^n$ are toric varieties. 

\subsection{The homogeneous coordinate ring} \label{coxhom}

Let $\mathbb{P}_{\Delta}$ be the toric variety determined by a fan
$\Delta$ in $N\simeq\mathbb{Z}^n$. The one-dimensional cones of $\Delta$ form the set
$\Delta(1)$, and given $\rho\in\Delta(1)$, we set $n_\rho$ the 
unique generator of $\rho\,\cap N$. If $\sigma$ is any cone in $\Delta$, then 
$\sigma(1)=\{\rho\in \Delta(1)\,:\,\rho \prec \sigma\}$ is the set of
one-dimensional faces of $\sigma$. We will assume that $\Delta(1)$
spans $N_{\mathbb{R}}=N\otimes_{\mathbb{Z}}\mathbb{R}\simeq\mathbb{R}^n$.

Each $\rho \in \Delta(1)$ corresponds to an irreducible 
$\mathbb{T}$-invariant Weil divisor $D_{\rho}$ in
$\mathbb{P}_{\Delta}$, where
$\mathbb{T}=N\otimes_{\mathbb{Z}}\mathbb{C}^{*} \simeq Hom_{\mathbb{Z}}(M,\mathbb{C}^*)$ is the torus acting
on $\mathbb{P}_{\Delta}$. The $\mathbb{T}$ -invariant Weil divisors
on $\mathbb{P}_{\Delta}$ form a free abelian group of rank $|{\Delta}(1)|$, that will be denoted $\mathbb{Z}^{{\Delta}(1)}$. 
Thus an element $D\in \mathbb{Z}^{{\Delta}(1)}$ is a sum $D=\sum_{\rho}a_{\rho}D_{\rho}$.
The $\mathbb{T}$-invariant Cartier divisors form a subgroup $\mathrm{Div}_{\mathbb{T}}(\mathbb{P}_{\Delta})\subset\mathbb{Z}^{\Delta(1)} $.

Each $m\in M$ gives a character $\chi^m:\mathbb{T}\rightarrow\mathbb{C}^*$, and hence $\chi^m$ is a
rational function on $\mathbb{P}_{\Delta}$. As is well-known,
$\chi^m$ gives the $\mathbb{T}$-invariant Cartier divisor $\mathrm{div}(\chi^m) = -\sum_{\rho}\langle m, n_{\rho}\rangle D_{\rho}$. 
We will consider the map
$$
\begin{array}{ccc}
  M & \longrightarrow& \mathbb{Z}^{\Delta(1)} \\
 m & \longmapsto& D_{m}=\sum_{\rho}\langle m, n_{\rho} \rangle D_{\rho}.
\end{array}
$$
\\
This map is injective since $\Delta(1)$ spans $N_{\mathbb{R}}$. By
\cite{Fu}, we have a commutative diagram

\begin{equation}\label{eq1}
\begin{array}{ccccccccc}
  0 & \rightarrow & M & \rightarrow &\mathrm{Div}_{\mathbb{T}}(\mathbb{P}_{\Delta})
    & \rightarrow &\mathrm{Pic}(\mathbb{P}_{\Delta})&  \rightarrow & 0 \\
&&\shortparallel& &\downarrow& &\downarrow& &
\\
  0 & \rightarrow & M & \rightarrow & \mathbb{Z}^{\Delta(1)}
   & \rightarrow & \mathcal{A}_{n-1}(\mathbb{P}_{\Delta}) & \rightarrow &
  0
\end{array}
\end{equation}
For each $\rho\in \Delta(1)$, introduce a variable $z_{\rho}$,
and consider the polynomial ring
$$
\mathrm{S}=\mathbb{C}[z_{\rho}]=\mathbb{C}[z_{\rho} : \rho\in\Delta(1)].
$$
Note that a monomial $\prod_{\rho}z_{\rho}^{a_{\rho}}$
determines a divisor $D=\sum_{\rho}a_{\rho}D_{\rho}$ and to emphasize
this relationship, we will write the monomial as $z^{D}$. We will
grade $\mathrm{S}$ as follows: the degree of a monomial $z^{D}\in \mathrm{S}$ is
$\deg(z^{D}) = [D]\in \mathcal{A}_{n-1}(\mathbb{P}_{\Delta})$. 
Using the exact sequence \eqref{eq1}, it follows that two monomials 
$\prod_{\rho}z_{\rho}^{a_{\rho}}$ and $\prod_{\rho}z_{\rho}^{b_{\rho}}$ in $\mathrm{S}$ have the same degree if
and only if there is some $m\in M$ such that $a_{\rho}=\langle m, n_{\rho}\rangle+b_{\rho}$ for every $\rho$. Then
$$
\mathrm{S}=\bigoplus_{\alpha \in
A_{n-1}(\mathbb{P}_{\Delta})}\mathrm{S}_{\alpha},
$$
where
$\mathrm{S}_{\alpha}=\bigoplus_{\deg(z^{D})=\alpha}\mathbb{C} \cdot z^{D}$. Note also that $\mathrm{S}_{\alpha}\cdot \mathrm{S}_{\beta}
=\mathrm{S}_{\alpha+\beta}$. 
The polynomial ring $\mathrm{S}$ is called homogeneous coordinate ring of the toric variety $\mathbb{P}_{\Delta}$.

Denote by $\mathcal{O}_{\mathbb{P}_{\Delta}}$ the structure sheaf of $\mathbb{P}_{\Delta}$. Let $\mathcal{O}_{\mathbb{P}_{\Delta}}(D)$ be the coherent sheaf on $\mathbb{P}_{\Delta}$ determined by a Weil divisor $D$. If $\alpha = [D] \in \mathcal{A}_{n-1}(\mathbb{P}_{\Delta})$, then it follows from \cite{Cox} that
$$
\mathrm{S}_{\alpha}\simeq \mathrm{H}^0(\mathbb{P}_{\Delta},\mathcal{O}_{\mathbb{P}_{\Delta}}(D)),
$$
moreover, if $\alpha=[D_1]$ and $\beta=[D_2]$, there is a commutative diagram
$$
\begin{array}{ccc}
  \mathrm{S}_{\alpha}\otimes \mathrm{S}_{\beta} & \longrightarrow & \mathrm{S}_{\alpha+\beta} \\
  \downarrow &  & \downarrow \\
   \mathrm{H}^0(\mathbb{P}_{\Delta},\mathcal{O}_{\mathbb{P}_{\Delta}}(D_1))\otimes \mathrm{H}^0(\mathbb{P}_{\Delta},\mathcal{O}_{\mathbb{P}_{\Delta}}(D_2))& \longrightarrow & \mathrm{H}^0(\mathbb{P}_{\Delta},\mathcal{O}_{\mathbb{P}_{\Delta}}(D_{1}+D_{2}))
\end{array}
$$
where the top arrow is the polynomial multiplication. If
$\mathbb{P}_{\Delta}$ is a complete toric variety, then it follows from \cite{Cox} that 
\begin{itemize}
  \item [(i)] $\mathrm{S}_{\alpha}$ is finite dimensional for every $\alpha$, and in
particular, $\mathrm{S}_0 = \mathbb{C}$.
  \item [(ii)] If $\alpha= [D]$ for an effective divisor
$D=\sum_{\rho}a_{\rho}D_{\rho}$, then
$\mathrm{dim}_{\mathbb{C}}(\mathrm{S}_{\alpha})=\#(P_{D}\cap M)$, where
$P_{D}=\{m\in M_{\mathbb{R}} : \langle m,
n_{\rho}\rangle\geq -a_{\rho}$ for all $\rho\}$.
\end{itemize}

\subsection{The toric homogeneous coordinates}

Given a toric variety $\mathbb{P}_{\Delta}$, the Weil divisor class group $\mathcal{A}_{n-1}(\mathbb{P}_{\Delta})$ is a finitely generated
abelian group of rank $r=k-n$, where $k=|\Delta(1)|$. If we apply $Hom_{\mathbb{Z}}(-,\mathbb{C}^*)$ to the bottom exact sequence of 
\eqref{eq1}, then we get the exact sequence 
$$1 \longrightarrow G \longrightarrow (\mathbb{C}^*)^{\Delta(1)}\longrightarrow \mathbb{T} \longrightarrow 1\,,$$
where $G = Hom_{\mathbb{Z}}(\mathcal{A}_{n-1}(\mathbb{P}_{\Delta}), \mathbb{C}^*)$. Since $(\mathbb{C}^*)^{\Delta(1)}$ acts naturally on 
$\mathbb{C}^{\Delta(1)}$, the subgroup $G\subset (\mathbb{C}^*)^{\Delta(1)}$ 
acts on $\mathbb{C}^{\Delta(1)}$ by 
$$g\cdot t=\Big(g\big([D_{\rho}]\big) \, t_{\rho}\Big)\,,$$
for $g:\mathcal{A}_{n-1}(\mathbb{P}_{\Delta}) \rightarrow \mathbb{C}^*$ in $G$, and $t=(t_{\rho})\in\mathbb{C}^{\Delta(1)}$. The explicit equations for $G$ as a subgroup of the torus $(\mathbb{C^*})^{\Delta(1)}$ is given by 
$$G=\left\{(t_{\rho})\in(\mathbb{C}^*)^{\Delta(1)}\, 
\middle| \, \prod_{\rho} t_{\rho}^{\left\langle\, m_{i},\,n_{\rho}\right\rangle}=1, 
\,\, 1\leq i\leq n\right\},$$
where $m_1,\dots,m_n$  is a basis of $M$; see \cite{Cox2}.

For each cone $\sigma \in \Delta$, define the monomial
$$z^{\widehat{\sigma}}=\prod_{\rho\notin \sigma(1)}z_{\rho}\,,$$
which is the product of the variables corresponding to rays not in $\sigma$. Then define 
$$\mathcal{Z}=V(\{z^{\widehat{\sigma}}\,:\,\sigma \in \Delta\})\subset \mathbb{C}^{\Delta(1)}.$$ 
We have that $\mathcal{Z} \subset \mathbb{C}^{\Delta(1)}$ has codimension at least two, and 
$\mathcal{Z}=\left\{0\right\}$ when $r=1$; see \cite{Cox}. 

\begin{teo}\cite{Cox}\label{Cox}
Let $\mathbb{P}_{\Delta}$ be a $n$-dimensional toric variety such that $\Delta(1)$ spans $N_{\mathbb{R}}$. Then

\begin{enumerate}

  \item [(i)] The set $\mathbb{C}^{\Delta(1)}-{\mathcal{Z}}$ is invariant under the action of the group $G$.

  \item [(ii)] $\mathbb{P}_{\Delta}$ is naturally isomorphic to the categorical quotient $(\mathbb{C}^{\Delta(1)}-{\mathcal{Z}})/G$.

  \item [(iii)] $\mathbb{P}_{\Delta}$ is the geometric quotient $(\mathbb{C}^{\Delta(1)}-{\mathcal{Z}})/G$ if and only if
	$\mathbb{P}_{\Delta}$ is an orbifold.

\end{enumerate}

\end{teo}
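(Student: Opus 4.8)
The plan is to reduce everything to the affine charts $\mathcal{U}_{\sigma}$ and their preimages, and then glue. For each cone $\sigma \in \Delta$ set $W_{\sigma} = \{x \in \mathbb{C}^{\Delta(1)} : z^{\widehat{\sigma}}(x) \neq 0\}$; since a point lies in $\mathcal{Z}$ precisely when every $z^{\widehat{\sigma}}$ vanishes there, the $W_{\sigma}$ form an open affine cover of $\mathbb{C}^{\Delta(1)} - \mathcal{Z}$. Part (i) is then immediate: $G$ sits inside $(\mathbb{C}^*)^{\Delta(1)}$ and acts by rescaling each coordinate $z_{\rho}$ by a nonzero scalar, so it preserves each locus $\{z_{\rho} = 0\}$ and hence the vanishing locus of every monomial $z^{\widehat{\sigma}}$. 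Consequently $\mathcal{Z}$ and its complement are $G$-invariant and each $W_{\sigma}$ is $G$-stable.

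For part (ii) the core is the affine identification $\mathcal{U}_{\sigma} \cong W_{\sigma}/G$. The coordinate ring of $W_{\sigma}$ is the localization $\mathrm{S}_{z^{\widehat{\sigma}}}$, and because $G = \mathrm{Hom}_{\mathbb{Z}}(\mathcal{A}_{n-1}(\mathbb{P}_{\Delta}), \mathbb{C}^*)$ acts through the $\mathcal{A}_{n-1}$-grading, the invariant ring $(\mathrm{S}_{z^{\widehat{\sigma}}})^{G}$ coincides with the degree-zero part $(\mathrm{S}_{z^{\widehat{\sigma}}})_{0}$. I would then invoke the bottom exact sequence of $(\ref{eq1})$: a Laurent monomial $\prod_{\rho} z_{\rho}^{a_{\rho}}$ has degree $0$ in $\mathcal{A}_{n-1}(\mathbb{P}_{\Delta})$ exactly when $a_{\rho} = \langle m, n_{\rho}\rangle$ for some $m \in M$. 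Inside $\mathrm{S}_{z^{\widehat{\sigma}}}$ only the variables $z_{\rho}$ with $\rho \notin \sigma(1)$ are inverted, so such a monomial lies in the ring precisely when $\langle m, n_{\rho}\rangle \geq 0$ for all $\rho \in \sigma(1)$, i.e. when $m \in \check{\sigma} \cap M$. This yields a degree-preserving isomorphism $(\mathrm{S}_{z^{\widehat{\sigma}}})_{0} \cong \mathbb{C}[\check{\sigma} \cap M]$, whence $\mathrm{Spec}\,(\mathrm{S}_{z^{\widehat{\sigma}}})^{G} \cong \mathcal{U}_{\sigma}$, the affine categorical quotient. It remains to glue: for $\tau \prec \sigma$ one has $W_{\tau} \subset W_{\sigma}$ matching $\mathcal{U}_{\tau} \hookrightarrow \mathcal{U}_{\sigma}$, so the local quotient maps patch to a morphism $\pi \colon \mathbb{C}^{\Delta(1)} - \mathcal{Z} \to \mathbb{P}_{\Delta}$, and the global universal property follows from the affine one since a $G$-invariant morphism out of $\mathbb{C}^{\Delta(1)} - \mathcal{Z}$ is determined by its restrictions to the $W_{\sigma}$.

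Part (iii) rests on the principle that a categorical quotient by a reductive group is geometric precisely when every orbit in the total space is closed, equivalently when all fibers of $\pi$ are single $G$-orbits. Working again over a fixed $W_{\sigma}$, I would control the $G$-orbits through the combinatorics of $\sigma$: the stabilizer of a point and the closedness of its orbit are governed by the linear independence of the generators $\{n_{\rho} : \rho \in \sigma(1)\}$ over $\mathbb{R}$. When $\sigma$ is simplicial these generators are independent, $G$ acts with finite stabilizers and closed orbits of constant dimension $r = \mathrm{rank}\,\mathcal{A}_{n-1}(\mathbb{P}_{\Delta})$, and $\mathcal{U}_{\sigma}$ is the honest orbit space, a quotient $\mathbb{C}^{n}/G_{\sigma}$ by a finite group $G_{\sigma}$; conversely a non-simplicial $\sigma$ forces a positive-dimensional family of non-closed orbits, so the quotient fails to be geometric. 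Since $\mathbb{P}_{\Delta}$ is an orbifold exactly when $\Delta$ is simplicial, combining the charts gives the stated equivalence.

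The main obstacle is part (iii): formulating the orbit-closedness criterion for $G$ (a torus times a finite group) and verifying that simplicial cones produce closed orbits of the constant dimension $r$ while non-simplicial cones do not. Identifying the finite stabilizers $G_{\sigma}$ with the local class data and ruling out jumps in orbit dimension is where the combinatorics of the fan has to be pushed through with care; by contrast parts (i) and (ii) are essentially formal once the $\mathcal{A}_{n-1}$-grading and the exact sequence $(\ref{eq1})$ are in hand.
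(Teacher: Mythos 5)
Note that the paper offers no proof of this statement at all: it is quoted verbatim from Cox's paper \cite{Cox} (Theorem 2.1 there), so there is no internal argument to compare against. Your sketch correctly reconstructs Cox's original proof along the same lines --- the $G$-stable cover by the loci $W_{\sigma}=\{z^{\widehat{\sigma}}\neq 0\}$, the identification $(\mathrm{S}_{z^{\widehat{\sigma}}})^{G}=(\mathrm{S}_{z^{\widehat{\sigma}}})_{0}\cong\mathbb{C}[\check{\sigma}\cap M]$ via the exact sequence \eqref{eq1}, gluing of the affine good quotients, and the orbit-closedness criterion with the simplicial/orbifold dictionary for (iii) --- so it is essentially the same approach as the cited source.
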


\subsection{Quasi-smooth hypersurfaces}\label{Qsh}

Let $\mathbb{P}_{\Delta}$ be a complex orbifold. An element $\alpha\in\mathcal{A}_{n-1}(\mathbb{P}_{\Delta})$
gives the character $\chi^{\alpha}:G \rightarrow \mathbb{C}^*$ that sends $g\in G=Hom_{\mathbb{Z}}(\mathcal{A}_{n-1}(\mathbb{P}_{\Delta}), \mathbb{C}^*)$ to $g(\alpha)\in\mathbb{C}^*$.
The action of $G$ on $\mathbb{C}^{\Delta(1)}$ induces
an action on $\mathrm{S}$ with the property that given $f\in \mathrm{S}$, we have
$$
f\in \mathrm{S}_{\alpha }\Leftrightarrow f(g\cdot z)= \chi^{\alpha}(g)f(z), \
\ \forall \ g\in G, \ z\in \mathbb{C}^{\Delta(1)}.
$$
We say that $f\in \mathrm{S}_{\alpha }$ is quasi-homogeneous of
degree $\alpha$. It follows that the equation $V=\left\{f=0\right\}$ is
well-defined in $\mathbb{P}_{\Delta}$ and it defines a hypersurface. We say that $V=\left\{f=0\right\}$ is a quasi-homogeneous hypersurface
of degree $\alpha$. We say that $V=\left\{f=0\right\}$ is \textit{quasi-smooth} if its tangent cone $\left\{f=0\right\}$ on $\mathbb{C}^{\Delta(1)}-{\mathcal{Z}}$ is smooth. We have the following theorem.

\begin{teo}\cite{Cox1}\label{Cox1}
Let $V=\left\{f=0\right\}\subset \mathbb{P}_{\Delta}$ be a quasi-homogeneous hypersurface.
Then $V$ is quasi-smooth if and only if $V$ is a suborbifold of $\mathbb{P}_{\Delta}$.
\end{teo}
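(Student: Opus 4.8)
The plan is to reduce the global statement to an explicit local analysis on the toric charts, using that $\mathbb{P}_{\Delta}$ is a geometric quotient. Write $W = \mathbb{C}^{\Delta(1)} - \mathcal{Z}$ and let $\pi : W \to \mathbb{P}_{\Delta}$ be the quotient map, which by Theorem \ref{Cox}$(iii)$ is geometric and for which $G$ acts with finite stabilizers precisely because $\mathbb{P}_{\Delta}$ is an orbifold. Since $f$ is quasi-homogeneous, its zero locus $\widehat{V} = \{f = 0\} \cap W = \pi^{-1}(V)$ is $G$-invariant, and by definition $V$ is quasi-smooth exactly when $\widehat{V}$ is smooth. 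Thus the theorem amounts to the equivalence: $\widehat{V}$ is smooth if and only if $V$ is a suborbifold.

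First I would construct the orbifold charts concretely. For each maximal cone $\sigma \in \Delta$, which has exactly $n$ rays because $\Delta$ is simplicial, put $S_{\sigma} = \{z_{\rho} = 1 : \rho \notin \sigma(1)\} \cap W \cong \mathbb{C}^{n}$, with coordinates indexed by $\sigma(1)$. The key linear-algebra input is that, $\Delta$ being simplicial, the generators $n_{\rho}$ with $\rho \in \sigma(1)$ form a basis of $N_{\mathbb{R}}$; dualizing, the $r = |\Delta(1)| - n$ classes $[D_{\rho}]$ with $\rho \notin \sigma(1)$ form a basis of $\mathcal{A}_{n-1}(\mathbb{P}_{\Delta}) \otimes \mathbb{Q}$. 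Consequently the homomorphism $G \to (\mathbb{C}^*)^{\{\rho \notin \sigma(1)\}}$, $g \mapsto \big(g([D_{\rho}])\big)$, is surjective with finite kernel $\Gamma_{\sigma}$. Using this surjectivity to normalize the coordinates $z_{\rho}$ with $\rho \notin \sigma(1)$ to $1$, I expect to identify $\pi^{-1}(U_{\sigma})$ with the associated bundle $G \times_{\Gamma_{\sigma}} S_{\sigma}$ and to realize the orbifold uniformization $S_{\sigma}/\Gamma_{\sigma} \cong U_{\sigma}$. Establishing this local product decomposition rigorously --- that the $G$-action is transverse to $S_{\sigma}$ with exactly the finite residual group $\Gamma_{\sigma}$ --- is the technical heart of the argument, and it is here that simpliciality is indispensable.

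Granting the decomposition, both implications follow at once. Because $\widehat{V}$ is $G$-invariant, the isomorphism $\pi^{-1}(U_{\sigma}) \cong G \times_{\Gamma_{\sigma}} S_{\sigma}$ carries $\widehat{V} \cap \pi^{-1}(U_{\sigma})$ onto $G \times_{\Gamma_{\sigma}} (\widehat{V} \cap S_{\sigma})$, where $\widehat{V} \cap S_{\sigma} = \{f_{\sigma} = 0\}$ and $f_{\sigma}$ is the dehomogenization of $f$ obtained by setting $z_{\rho} = 1$ for $\rho \notin \sigma(1)$. Since $G$ is smooth, $\widehat{V}$ is smooth over $\pi^{-1}(U_{\sigma})$ if and only if $\{f_{\sigma} = 0\}$ is smooth in $\mathbb{C}^{n}$. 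On the other hand $\{f_{\sigma} = 0\}$ is the preimage of $V$ under the uniformizing map $S_{\sigma} \to U_{\sigma}$, so by definition $V$ is a suborbifold precisely when every such $\{f_{\sigma} = 0\}$ is a smooth $\Gamma_{\sigma}$-invariant hypersurface. As the $U_{\sigma}$ cover $\mathbb{P}_{\Delta}$ and the $G$-orbits meeting the slices $S_{\sigma}$ cover $W$, letting $\sigma$ range over all maximal cones gives $\widehat{V}$ smooth $\Leftrightarrow$ every $\{f_{\sigma}=0\}$ smooth $\Leftrightarrow$ $V$ a suborbifold, closing both directions simultaneously. Should the explicit toric normalization prove unwieldy, I would obtain the same local product structure more abstractly from Luna's \'etale slice theorem, valid since $G$ is reductive and acts with finite stabilizers.
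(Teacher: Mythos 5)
The paper does not prove this statement at all: Theorem \ref{Cox1} is quoted verbatim, with citation, from Batyrev--Cox \cite{Cox1} (it is Proposition~3.5 there), so there is no in-paper proof to compare against. Measured against the source, your argument is essentially the original one: Batyrev--Cox also prove the equivalence by working over the affine charts $U_{\sigma}$ of the maximal (simplicial, $n$-dimensional) cones, trivializing the quotient $\pi:\mathbb{C}^{\Delta(1)}-\mathcal{Z}\to\mathbb{P}_{\Delta}$ over $U_{\sigma}$ up to the finite group $N/N_{\sigma}$ ($N_{\sigma}$ the sublattice spanned by the $n_{\rho}$, $\rho\in\sigma(1)$), and comparing smoothness of the affine cone with smoothness of the dehomogenization $f_{\sigma}$ in the uniformizing chart. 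Your reconstruction is correct, and you rightly flag the local product decomposition as the technical heart.

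Three points deserve to be nailed down if you write this out. First, surjectivity of $G\to(\mathbb{C}^*)^{\{\rho\notin\sigma(1)\}}$ does not follow from rational linear independence alone (that only gives the finite kernel $\Gamma_{\sigma}$): you need that $\mathbb{C}^*$ is divisible, so that applying $\mathrm{Hom}_{\mathbb{Z}}(-,\mathbb{C}^*)$ to $0\to\mathbb{Z}^{r}\to\mathcal{A}_{n-1}(\mathbb{P}_{\Delta})\to Q\to 0$ (with $Q$ finite) kills the $\mathrm{Ext}^1$ term. Second, the extension $1\to\Gamma_{\sigma}\to G\to(\mathbb{C}^*)^{r}\to 1$ need not split (already $t\mapsto t^2$ on $\mathbb{C}^*$ shows this), so you cannot normalize coordinates by a global algebraic section; instead, check that the natural morphism $G\times_{\Gamma_{\sigma}}S_{\sigma}\to\pi^{-1}(U_{\sigma})$ is a bijective morphism between smooth (hence normal) varieties and conclude it is an isomorphism by Zariski's main theorem. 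Third, the statement ``$V$ is a suborbifold'' refers to the \emph{standard} orbifold structure of the simplicial toric variety, whose charts are $\mathbb{C}^{\sigma(1)}\to\mathbb{C}^{\sigma(1)}/(N/N_{\sigma})\simeq U_{\sigma}$; so you must identify your residual group $\Gamma_{\sigma}=\mathrm{Hom}_{\mathbb{Z}}\bigl(\mathcal{A}_{n-1}(\mathbb{P}_{\Delta})/\langle[D_{\rho}]:\rho\notin\sigma(1)\rangle,\mathbb{C}^*\bigr)$ with $N/N_{\sigma}$ and check that the slice map $S_{\sigma}\to U_{\sigma}$ agrees with the standard uniformization, which is exactly the content of Cox's local computation in \cite{Cox}. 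Finally, note that the maximal cones being $n$-dimensional and the $U_{\sigma}$ covering $\mathbb{P}_{\Delta}$ both use completeness of $\Delta$, not just simpliciality. With these repairs your sketch is a complete proof, equivalent in substance to the one in \cite{Cox1}; the Luna-slice alternative you mention would also work but is heavier machinery than the explicit toric trivialization.
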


Suppose there is a complex number $a_{\rho}$ for each $\rho\in\Delta(1)$ with the property that $\sum_{\rho}a_{\rho} n_{\rho}=0$
in $N_{\mathbb{C}}$. Then, for any class $\alpha \in \mathcal{A}_{n-1}(\mathbb{P}_{\Delta})$, there is a constant 
$\theta(\alpha)$ with the property that for any quasi-homogeneous polynomial $f \in \mathrm{S}$ of degree $\alpha$, we have
\begin{equation}\label{eq2}   
\begin{array}{ccccccccc}
i_{R}(df)=\theta(\alpha)\cdot f,
\end{array}
\end{equation}
where $R=\sum_{\rho}a_{\rho}z_{\rho}\frac{\partial}{\partial z_{\rho}}$. The identity \eqref{eq2} is called the Euler formula determined by 
$\left\{a_{\rho}\right\}$. Moreover, considering the $r=k-n$ linearly independent over $\mathbb{Z}$ relations among the $n_{\rho}$, we have $r$ vector fields $R_i$ tangent to the orbits of $G$ and $\mathrm{Lie}(G)=\langle R_1,\dots,R_r\rangle$; for more details see 
\cite{Cox1}. We will call these vector fields $R_i, i=1,\ldots,r$, the radial vector fields on $\mathbb{P}_{\Delta}$. 

We shall consider the following subfield of $\mathbb{C}(z_{\rho})=\mathrm{Frac}(\mathbb{C}[z_{\rho}])$ given by

$$
\widetilde{K}(\mathbb{P}_{\Delta})=\left\{\frac{P}{Q}\in
\mathbb{C}(z_{\rho}) : P\in \mathrm{S}_{\alpha}, Q\in \mathrm{S}_{\beta}\right\}.
$$
Thus, the field of rational functions on $\mathbb{P}_{\Delta}$,
denoted by $K(\mathbb{P}_{\Delta})$, is the subfield of
$\widetilde{K}(\mathbb{P}_{\Delta})$ such that $\deg(P)=\deg(Q)$.  
It follows that the polynomials $P,Q \in \mathrm{S}_{\alpha}$ define a rational function 
$\frac{P}{Q}:\mathbb{P}_{\Delta} \dashrightarrow \mathbb{P}^1$.

\subsection{Examples}

\bigskip

Let $\mathbb{P}_{\Delta}$ be a $n$-dimensional toric variety where $\Delta(1)$
spans $N_{\mathbb{R}}$. We know that $|\Delta(1)|=n+r$, where $r$ is the 
rank of the finitely generated abelian group $\mathcal{A}_{n-1}(\mathbb{P}_{\Delta})$. 
We will denote 
$\Delta(1)=\left\{\rho_1,\dots,\rho_{n+r}\right\}$, $\mathrm{S}=\mathbb{C}\left[z_1,\dots,z_{n+r}\right]$, and $D_i=D_{\rho_i}$ for all $i=1,\dots,n+r$. \\

\begin{enumerate}
	\item \label{exe1} \textbf{Weighted projective spaces.} \cite{Cox2} Let $\omega_0,\dots,\omega_n$ be positive integers with 
$gcd(\omega_0,\dots,\omega_n)=1$.
Set $\omega=(\omega_{0},\dots,\omega_{n})$. Consider the lattice $N=\mathbb{Z}^{n+1} / \mathbb{Z}\cdot\omega$. The dual lattice is 
$$M=\left\{(a_0,\dots,a_n)\in\mathbb{Z}^{n+1}\,|\,a_0\omega_0+\cdots+a_n\omega_n=0\right\}.$$ 
Denote by $e_0,\dots,e_n$ the standard basis of $\mathbb{Z}^{n+1}$. We have the exact sequence
$$0\longrightarrow M\stackrel{\alpha}{\longrightarrow}\mathbb{Z}^{n+1}\stackrel{\beta}{\longrightarrow}\mathbb{Z}\longrightarrow0,$$
where $\alpha(m)=\left(\left\langle m,\bar{e}_0\right\rangle,\dots,\left\langle m,\bar{e}_n\right\rangle\right)$ and 
$\beta(a_0,\dots,a_n)=a_0\omega_0+\cdots+a_n\omega_n$. Let $\Delta$ be the fan made up of the cones generated by all the proper subsets of 
$\{\bar{e}_0,\dots,\bar{e}_n\}$. Then, $G$ is given by $G=\left\{(t^{\omega_0},\dots,t^{\omega_n})\,|\,t\in \mathbb{C}^* \right\}\simeq\mathbb{C}^{*}$ and its action on $\mathbb{C}^{n+1}$ is given by 
$$t\cdot(z_0,\dots,z_n)=(t^{\omega_0} z_0,\dots, t^{\omega_n} z_n).$$
Since $\Delta$ is simplicial and has $n+1$ rays, we have $\mathcal{Z}=\left\{0\right\}$ and
$$\mathbb{P}(\omega) := \mathbb{P}_\Delta = \left(\mathbb{C}^{n+1} - \{0\}\right) / \mathbb{C}^*,$$ 
is the usual representation of weighted projective spaces as a quotient. If $\omega_0=\cdots=\omega_n=1$, then 
$\mathbb{P}(\omega)=\mathbb{P}^{n}$ and when $\omega_0,\dots,\omega_n$ are pairwise coprime, we have 
$$\mathrm{Sing}(\mathbb{P}(\omega))=\left\{\bar{e}_i\,|\,\omega_i>1\right\}.$$
Moreover, we have $\mathcal{A}_{n-1}(\mathbb{P}(\omega))\simeq\mathbb{Z}$ and $\deg(z_i)=\omega_i$ for all $0\leq i\leq n+1$.
Consequently the homogeneous coordinate ring of $\mathbb{P}(\omega)$ is given by
$\mathrm{S}=\oplus_{\alpha\geq 0}\mathrm{S}_{\alpha}$, where 
$$\mathrm{S}_{\alpha}=\bigoplus_{p_0\omega_0+\cdots+p_n\omega_n=\alpha} \mathbb{C}\cdot {z_0}^{p_0}\dots\,{z_n}^{p_n}.$$\\
 \item \label{exe2} \textbf{Multiprojective spaces.} \cite{Cox2} Let $e_{1,1},\dots,e_{1,n}$ be a basis of $N_1\simeq\mathbb{Z}^n$, and set 
$e_{1,0}=-e_{1,1}-\cdots-e_{1,n}$. If $\Delta_1$ is the fan in $(N_1)_{\mathbb{R}}$ made up of the cones generated by all the proper subsets of $\left\{e_{1,0},\dots,e_{1,n}\right\}$, then $\mathbb{P}_{\Delta_1}=\mathbb{P}^n$. Similarly, if $e_{2,1},\dots,e_{2,m}$ is a basis of 
$N_2\simeq\mathbb{Z}^m$ and $e_{2,0}=-e_{2,1}-\cdots-e_{2,m}$, then we have 
$\mathbb{P}_{\Delta_2}=\mathbb{P}^m$. If $N=N_1 \oplus N_2$, then $\Delta=\Delta_1\times\Delta_2$ is a fan in $N_{\mathbb{R}}$ and
$\mathbb{P}_{\Delta}=\mathbb{P}_{\Delta_1}\times\mathbb{P}_{\Delta_2}=\mathbb{P}^n\times\mathbb{P}^m$. Set $a=(a_1,\dots,a_n)\in\mathbb{Z}^n$ and $b=(b_1,\dots,b_m)\in\mathbb{Z}^m$, we have the exact sequence
$$0\longrightarrow \mathbb{Z}^n \oplus \mathbb{Z}^m \stackrel{\alpha}{\longrightarrow}
\mathbb{Z}^{n+1}\oplus \mathbb{Z}^{m+1}\stackrel{\beta}{\longrightarrow}\mathbb{Z}\oplus\mathbb{Z}\longrightarrow0,$$
where $\alpha(a,b)=(-a_1-\cdots-a_n,a,-b_1-\cdots-b_m,b)$ and $\beta(a_0,a,b_0,b)=(a_0+\cdots+a_n,b_0+\cdots+b_m)$. Then, $G$ is given by
$G=\left\{(\mu,\dots,\mu,\lambda,\dots,\lambda)\in(\mathbb{C}^*)^{n+1}\times(\mathbb{C}^*)^{m+1}\right\}\simeq\mathbb{C}^{*}\times\mathbb{C}^{*}$ and its action on $\mathbb{C}^{n+1}\times\mathbb{C}^{m+1}$ is given by 
$$(\mu,\lambda)\cdot(z_1,z_2)=(\mu\cdot z_1,\lambda\cdot z_2)=(\mu z_{1,0},\dots,\mu z_{1,n},\lambda z_{2,0},\dots,\lambda z_{2,m}).$$
It is possible to show that  $\mathcal{Z}=\left(\left\{0\right\}\times\mathbb{C}^{m+1}\right) \cup \left(\mathbb{C}^{n+1}\times\left\{0\right\}\right)$. So, we have
$$\mathbb{P}^n\times\mathbb{P}^m=\left(\mathbb{C}^{n+1}\times\mathbb{C}^{m+1}-\mathcal{Z}\right)\,/\,(\mathbb{C}^{*})^2,$$
is the usual representation of $\mathbb{P}^n\times\mathbb{P}^m$ as a quotient space. 

Moreover, we have $\mathcal{A}_{n+m-1}(\mathbb{P}^n\times\mathbb{P}^m)\simeq\mathbb{Z}^2$ and
$\deg(z_{1,i})=(1,0)$ for all $0\leq i\leq n$, $\deg(z_{2,j})=(0,1)$ for all $0\leq j\leq m$.
Consequently the homogeneous coordinate ring of $\mathbb{P}^n\times\mathbb{P}^m$ is given by 
$\mathrm{S}=\oplus_{\alpha,\,\beta\geq 0}\mathrm{S}_{(\alpha,\beta)}$, where 
$$\mathrm{S}_{(\alpha,\beta)}=\bigoplus_{p_0+\cdots+p_n=\alpha\,;\,q_0+\cdots+q_m=\beta} \mathbb{C}\cdot {z_{1,0}}^{p_0}\dots\,{z_{1,n}}^{p_n} 
{z_{2,0}}^{q_0}\dots\,{z_{2,m}}^{q_m},$$
is the ring of bihomogeneous polynomials of bidegree $(\alpha,\beta)$.\\
 \item \label{exe3} \textbf{Rational normal scrolls.} Let $N\simeq\mathbb{Z}^2$ and $1\leq a\leq b$ integers. Consider the polygon 
$$P_{a,b}=Conv(0,ae_1,e_2,be_1+e_2)\subset M_{\mathbb{R}}\simeq\mathbb{R}^2.$$
The polygon $P_{a,b}$ has $a+b+2$ lattice points. A rational normal scroll $\mathbb{F}(a,b)$ is the toric surface associated to the 
normal fan of $P_{a,b}$. A rational normal scroll is a smooth projective surface because $P_{a,b}$ is full dimensional smooth lattice polytope.

Consider the map $\varphi:\mathbb{C}^*\times\mathbb{C}^*\rightarrow\mathbb{P}^{a+b+2}$
$$\varphi(s,t)=(1:s:s^2:\dots:s^a:t:st:s^2t:\dots:s^bt).$$
Then $P_{a,b}$ is the Zariski closure of the image of $\varphi$. Rewriting the map as
$\tilde{\varphi}:\mathbb{C}\times\mathbb{P}^1\rightarrow\mathbb{P}^{a+b+2}$
$$\tilde{\varphi}(s,(t_1:t_2))=(t_1:st_1:s^2t_1:\dots:s^at_1:t_2:st_2:s^2t_2:\dots:s^bt_2),$$
we have $s\mapsto\tilde{\varphi}(s,(1:0))$ and $s\mapsto\tilde{\varphi}(s,(0:1))$ are the rational normal curves $\mathcal{C}_a\subset\mathbb{P}^{a+1}\subset\mathbb{P}^{a+b+2}$ and $\mathcal{C}_b\subset\mathbb{P}^{b+1}\subset\mathbb{P}^{a+b+2}$. 

The rational normal scrolls are Hirzebruch surfaces because the normal fan of $P_{a,b}$ defines a Hirzebruch surface $\mathcal{H}_{b-a}$,
so $\mathbb{F}(a,b)\simeq\mathcal{H}_{b-a}$. Analogously, for a $n$-dimensional rational normal scroll, consider  $1\leq a_1 \leq a_2 \leq \dots \leq a_n$ integers, we have a full dimensional smooth lattice polytope $P_{a_1,\dots,a_n}\subset\mathbb{R}^n$ having $2n$ lattice points as vertices. A rational normal scroll $\mathbb{F}(a_1,\dots,a_n)$ is the smooth projective toric variety associated to the normal fan of $P_{a_1,\dots,a_n}$. It is possible to show that $\mathbb{F}(a_1,\dots,a_n)\simeq\mathbb{P}(\mathcal{O}_{\mathbb{P}^1}(a_1)\oplus\dots\oplus\mathcal{O}_{\mathbb{P}^1}(a_n))$. For more details see \cite{Cox2}.

In general, let $a_1,\ldots,a_n$ be integers. Consider the $(\mathbb{C}^*)^2$ action on $\mathbb{C}^2\times\mathbb{C}^n$ given as follows 
$$(\lambda,\mu)(z_{1,1},z_{1,2},z_{2,1},\dots,z_{2,n})=(\lambda z_{1,1}, \lambda z_{1,2}, \mu\lambda^{-a_1} z_{2,1},
\dots,\mu\lambda^{-a_n} z_{2,n}).$$
Then 
$$\mathbb{F}(a_1,\dots,a_n)=(\mathbb{C}^2\times\mathbb{C}^n-\mathcal{Z})\,/\,(\mathbb{C}^*)^2,$$
where $\mathcal{Z}=\left(\left\{0\right\}\times\mathbb{C}^{n}\right) \cup \left(\mathbb{C}^{2}\times\left\{0\right\}\right)$.

Moreover, we have $\mathcal{A}_{n-1}(\mathbb{F}(a_1,\dots,a_n))\simeq\mathbb{Z}^2$ and the homogeneous coordinate ring associated to 
$\mathbb{F}(a_1,\dots,a_n)$ is given by $\mathrm{S}=\oplus_{\alpha\in\mathbb{Z},\,\beta\geq 0}\mathrm{S}_{(\alpha,\beta)}$, where 
$$\mathrm{S}_{(\alpha,\beta)}=\bigoplus_{\alpha=p_1+p_2 - \sum_iq_ia_i\,;
\,\beta=\sum_iq_i} \mathbb{C}\cdot {z_{1,1}}^{p_1}{z_{1,2}}^{p_2}{z_{2,1}}^{q_1}\dots\,{z_{2,n}}^{q_n}.$$
In particular $\deg(z_{1,1})=\deg(z_{1,2})=(1,0)$ and $\deg(z_{2,i})=(-a_i,1)$. Thus the total coordinate rings can have some elements with effective degree and other elements without. Finally we have
$$\mathbb{F}(a_1,\dots,a_n)=\mathbb{F}(b_1,\dots,b_n)\Longleftrightarrow 
\left\{a_1,\dots,a_n\right\}=\left\{b_1+c,\dots,b_n+c\right\},$$
for some $c\in\mathbb{Z}$. For more details see \cite[pp. 14]{Re}.\\
 \item \label{exe4} \textbf{A toric surface.} Let us consider an example where $G$ and $\mathcal{A}_{n-1}(\mathbb{P}_{\Delta})$ have torsion. Let $\Delta$ be a complete simplicial fan in $\mathbb{Z}^2$ with edges along $v_1=2e_1-e_2$, $v_2=-e_1+2e_2$ and $v_3=-e_1-e_2$. Then $\mathbb{P}_{\Delta}:=\mathbb{P}_{\Delta(0,2,1)}$ is a compact orbifold toric surface. We have the exact sequence
$$0\longrightarrow M\stackrel{\iota}{\longrightarrow}\mathbb{Z}^{3}\stackrel{\pi}{\longrightarrow}
\frac{\,\,\,\mathbb{Z}^{3}}{Im(\iota)}\longrightarrow0,$$
where $\iota(a,b)=(2a-b)D_1+(-a+2b)D_2+(-a-b)D_3$. We have 
$$\mathcal{A}_{1}(\mathbb{P}_{\Delta(0,2,1)})=\frac{\,\,\,\mathbb{Z}^{3}}{Im(\iota)}=\frac{\mathbb{Z}D_1+\mathbb{Z}D_2+\mathbb{Z}D_3}
{\mathbb{Z}(2D_1-D_2-D_3)+\mathbb{Z}(-D_1+2D_2-D_3)},$$
therefore
$$\mathcal{A}_{1}(\mathbb{P}_{\Delta(0,2,1)})\simeq\frac{\mathbb{Z}D_1+\mathbb{Z}D_2}{\mathbb{Z}(3D_1-3D_2)}\simeq\frac{\mathbb{Z}D_1+\mathbb{Z}(D_1-D_2)}{\mathbb{Z}3(D_1-D_2)}\simeq\mathbb{Z}\oplus\mathbb{Z}_3,$$
and
$$\pi(aD_1+bD_2+cD_3)=\big(a+b+c,\left[2b+c\right]\big).$$
Note that $G\subset(\mathbb{C}^*)^3$ is given by 
$$G=\left\{(t_1,t_2,t_3)\,|\,t_1^2=t_2t_3\,,\,t_2^2=t_1t_3\right\}=
\left\{(\omega t,t,\omega^2t)\,|\,t\in\mathbb{C}^*\,,\,\omega^3=1\right\},$$
that is $G\simeq\mathbb{C}^*\times\mu_3$ and its action on $\mathbb{C}^{3}$ is given by 
$$(t,\omega)\cdot(z_1,z_2,z_3)=(t\omega z_1,tz_2, t\omega^{2} z_3).$$
Since $\Delta$ is simplicial and $\mathcal{A}_{1}(\mathbb{P}_{\Delta(0,2,1)})$ has rank $1$, we have $\mathcal{Z}=\left\{0\right\}$ and
$$\mathbb{P}_{\Delta(0,2,1)} = \left(\mathbb{C}^{3} - \{0\}\right) / \left(\mathbb{C}^*\times\mu_3\right).$$
The singular set of $\mathbb{P}_{\Delta(0,2,1)}$ is
$$\mathrm{Sing}(\mathbb{P}_{\Delta(0,2,1)})=\left\{\left[1,0,0\right],\left[0,1,0\right],\left[0,0,1\right]\right\}.$$
Moreover, we have $\deg(z_1)=(1,\left[0\right])$, $\deg(z_2)=(1,\left[2\right])$ and $\deg(z_3)=(1,\left[1\right])$.
Consequently the homogeneous coordinates ring of $\mathbb{P}_{\Delta(0,2,1)}$ is given by
$\mathrm{S}=\oplus_{\alpha\geq 0,\,\beta\in\mathbb{Z}_3}\mathrm{S}_{(\alpha, \beta)}$, where 
$$\mathrm{S}_{(\alpha, \beta)}=\bigoplus_{\alpha=m_1+m_2+m_3\,;\,\beta=\left[2m_2+m_3\right]} 
\mathbb{C}\cdot {z_1}^{m_1}{z_2}^{m_2}{z_3}^{m_3}.$$

%For another interesting example where $\mathcal{A}_{n-1}(\mathbb{P}_{\Delta})$ have torsion, consider
%the toric variety $\mathbb{P}_{\Delta}$ associated to the normal fan of octahedron $P=Conv(\pm e_1,\pm e_2,\pm e_3)\subset M_{\mathbb{R}}\simeq\mathbb{R}^3$, then $\mathbb{P}_{\Delta}$ is a projective toric threefold $($is not an orbifold$)$
%and $\mathcal{A}_{2}(\mathbb{P}_{\Delta})=\mathbb{Z}^5\oplus\mathbb{Z}_2\oplus\mathbb{Z}_2$.

\end{enumerate}

\bigskip

\section{One-dimensional foliations}

Let $\mathbb{P}_{\Delta}$ be a complete simplicial toric variety  of dimension $n$. Let $r$ be the
rank of finitely generated abelian group $\mathcal{A}_{n-1}(\mathbb{P}_{\Delta})$. 
There exists an exact sequence, known as the generalized Euler's sequence,
$$0\rightarrow \mathcal{O}_{\mathbb{P}_{\Delta}}^{\oplus
r}\rightarrow\bigoplus_{i=1}^{n+r}\mathcal{O}_{\mathbb{P}_{\Delta}}(D_i)\rightarrow
\mathcal{T}\mathbb{P}_{\Delta}\rightarrow0,$$
\noindent where
$\mathcal{T}\mathbb{P}_{\Delta}=\mathcal{H}om(\Omega_{\mathbb{P}_{\Delta}}^1,\mathcal{O}_{\mathbb{P}_{\Delta}})$ is the so-called Zariski tangent sheaf of $\mathbb{P}_{\Delta}$; see \cite{Cox1}. 
Let $i:\mathbb{P}_{\Delta, \,\mathrm{reg}} \rightarrow \mathbb{P}_{\Delta}$
be the inclusion of the regular part $\mathbb{P}_{\Delta, \,\mathrm{reg}}= \mathbb{P}_{\Delta} - \mathrm{Sing}(\mathbb{P}_{\Delta})$.
Since $\mathbb{P}_{\Delta}$ is a complex orbifold then $\mathcal{T}\mathbb{P}_{\Delta} \simeq 
i_{\ast}\mathcal{T}\mathbb{P}_{\Delta, \,\mathrm{reg}}$, where $\mathcal{T}\mathbb{P}_{\Delta, \,\mathrm{reg}}$ is the tangent sheaf of $\mathbb{P}_{\Delta, \,\mathrm{reg}}$; 
see \cite[Appendix A.2]{Cox3}.

Let $\mathcal{O}_{\mathbb{P}_{\Delta}}(d_1,\dots,d_{n+r})=\mathcal{O}_{\mathbb{P}_{\Delta}}(\sum_{i=1}^{n+r}d_{i}D_{i})$, 
where $\sum_{i=1}^{n+r}d_{i}D_{i}$ is a Weil divisor. % Cartier divisor <-----> Invertible sheaf
Tensorizing the Euler's sequence by $\mathcal{O}_{\mathbb{P}_{\Delta}}(d_1,\dots,d_{n+r})$
we get
\begin{equation}\label{eul}
\begin{array}{ccccccccc}
0\rightarrow \mathcal{O}_{\mathbb{P}_{\Delta}}(d_1,\dots,d_{n+r})^{\oplus
r}\rightarrow\bigoplus_{i=1}^{n+r}\mathcal{O}_{\mathbb{P}_{\Delta}}(d_1,\dots,d_i+1,\dots,d_{n+r})\rightarrow
\mathcal{T}\mathbb{P}_{\Delta}(d_1,\dots,d_{n+r})\rightarrow0,
\end{array}
\end{equation}
\noindent where $\mathcal{T}\mathbb{P}_{\Delta}(d_1,\dots,d_{n+r})=
\mathcal{T}\mathbb{P}_{\Delta}\otimes\mathcal{O}_{\mathbb{P}_{\Delta}}(d_1,\dots,d_{n+r})$.

\begin{defi} \cite{Mau}
A one-dimensional holomorphic foliation $\mathcal{F}$ on $\mathbb{P}_{\Delta}$ of degree 
$\left[\sum_{i=1}^{n+r}d_{i}D_{i}\right]\in\mathcal{A}_{n-1}(\mathbb{P}_{\Delta})$
is a global section of $\mathcal{T}\mathbb{P}_{\Delta}(d_1,\dots,d_{n+r})$.
For simplicity of notation we say that $\mathcal{F}$ has degree $(d_1,\dots,d_{n+r})$.
We will consider one-dimensional holomorphic foliations whose singular scheme has codimension greater than $1$.
\end{defi}

Taking long exact cohomology sequence in \eqref{eul}, we have 
\begin{eqnarray*}
&\longrightarrow& \bigoplus_{i=1}^{n+r}H^0\Big(\mathbb{P}_{\Delta},\mathcal{O}_{\mathbb{P}_{\Delta}}(d_1,\dots,d_i+1,\dots,d_{n+r})\Big)
\stackrel{\rho}{\longrightarrow} H^0\big(\mathbb{P}_{\Delta},\mathcal{T}\mathbb{P}_{\Delta}(d_1,\dots,d_{n+r})\big) \\
&\longrightarrow& H^1\big(\mathbb{P}_{\Delta},\mathcal{O}_{\mathbb{P}_{\Delta}}(d_1,\dots,d_{n+r})\big)^{\oplus r}\longrightarrow \cdots
\end{eqnarray*}
We will consider one-dimensional holomorphic foliations in the image of the map $\rho$. For example, if
$H^1\left(\mathbb{P}_{\Delta}, \mathcal{O}_{\mathbb{P}_{\Delta}}
(d_1,\dots,d_{n+r})\right)=0$; see for instance the Demazure vanishing theorem \cite[Theorem $9.2.3$]{Cox2},
then, we have that a one-dimensional holomorphic foliation $\mathcal{F}$ on
$\mathbb{P}_{\Delta}$ of degree $(d_1,\dots,d_{n+r})$ is given by a
polynomial vector field in homogeneous coordinates of the form
$$
X=\sum_{i=1}^{n+r}P_i\frac{\partial}{\partial z_i},
$$
where $P_i$ is a polynomial of degree $(d_1,\dots,d_i+1,\dots,d_{n+r})$
for all $i=1,\dots,n+r$, modulo addition of a vector field of the form
$\sum_{i=1}^{r}g_iR_i$, where $R_1,\dots,R_r$ are the radial vector fields on $\mathbb{P}_{\Delta}$.
We say that $X$ is a quasi-homogeneous vector field. Moreover we have  
$$\mathrm{Sing}(\mathcal{F})=\pi\left(\left\{p\in\mathbb{C}^{n+r} : (R_1\wedge\cdots\wedge R_r\wedge X)(p)=0\right\}\right),$$
where $\pi:(\mathbb{C}^{n+r}-{\mathcal{Z}})/G \rightarrow \mathbb{P}_{\Delta}$ is the canonical projection; 
see for instance \cite{Brun}, \cite{Viv}, \cite{Jou} and \cite{So}.

Let $\mathcal{F}$ be a foliation on $\mathbb{P}_{\Delta}$ and $V = \{f = 0\}$ a quasi-homogeneous
hypersurface. We recall that $V$ is invariant by $\mathcal{F}$ if and only if 
$X(f) = g\cdot f$, where $X$ is a quasi-homogeneous vector field which defines $\mathcal{F}$ in homogeneous coordinates.

\subsection{Examples}

\bigskip

\begin{enumerate}
	\item \textbf{Weighted projective spaces.}
The Euler's sequence on $\mathbb{P}(\omega)$ is an exact sequence of orbibundles
$$
0\longrightarrow
\underline{\mathbb{C}}\longrightarrow\bigoplus_{i=0}^{n
}\mathcal{O}_{\mathbb{P}(\omega)}(\omega_i) \longrightarrow
T\mathbb{P}(\omega) \longrightarrow 0,
$$
where $\underline{\mathbb{C}}$ is the trivial line orbibundle on
$\mathbb{P}(\omega)$; see \cite{Ma}. Then, a one-dimensional holomorphic foliation $\mathcal{F}$
on $\mathbb{P}(\omega)$ of degree $d$ is a global section of 
$\mathcal{T}\mathbb{P}(\omega)\otimes\mathcal{O}_{\mathbb{P}(\omega)}(d)$. Here, the radial vector field is given by
$R = \omega_0 z_0 \frac{\partial}{\partial z_0}+\cdots+ \omega_n z_n \frac{\partial}{\partial z_n}$.\\
 \item \textbf{Multiprojective spaces.} The Euler's sequence over $\mathbb{P}^n$
$$0\longrightarrow \underline{\mathbb{C}}\;\;{\longrightarrow}\;\;
\mathcal{O}_{\mathbb{P}^{n}}(1)^{\oplus n+1}\;\;{\longrightarrow}\;\;T\mathbb{P}^{n}\longrightarrow 0,$$
gives, by direct summation, the exact sequence
$$0\longrightarrow \underline{\mathbb{C}^{2}}\,
{\longrightarrow}
\mathcal{O}_{\mathbb{P}^{n}\times\mathbb{P}^{m}}(1,0)^{\oplus n+1}\oplus\mathcal{O}_{\mathbb{P}^{n}\times\mathbb{P}^{m}}(0,1)^{\oplus m+1}
{\longrightarrow}\,
T(\mathbb{P}^{n}\times\mathbb{P}^{m})
\longrightarrow 0.$$
Then, a one-dimensional holomorphic foliation $\mathcal{F}$
on $\mathbb{P}^{n}\times\mathbb{P}^{m}$ of bidegree $(\alpha,\beta)$ is a global section of 
$T(\mathbb{P}^{n}\times\mathbb{P}^{m})\otimes\mathcal{O}_{\mathbb{P}^{n}\times\mathbb{P}^{m}}(\alpha,\beta)$. Here, the radial vector fields are given by \\
$R_1 = z_{1,0} \frac{\partial}{\partial z_{1,0}}+\cdots+ z_{1,n} \frac{\partial}{\partial z_{1,n}}$ and 
$R_2 = z_{2,0} \frac{\partial}{\partial z_{2,0}}+\cdots+ z_{2,m} \frac{\partial}{\partial z_{2,m}}$.\\
 \item \textbf{Rational normal scrolls.} The Euler's sequence on $\mathbb{F}(a):=\mathbb{F}(a_1,\dots,a_n)$ is
$$0 \rightarrow\mathcal{O}_{\mathbb{F}(a)}^{\oplus
2}\rightarrow\mathcal{O}_{\mathbb{F}(a)}(1,0)^{\oplus 2}\oplus\bigoplus_{i=1}^n
\mathcal{O}_{\mathbb{F}(a)}(-a_i,1)\rightarrow \mathcal{T}\mathbb{F}(a)\rightarrow0.$$
Then, a one-dimensional holomorphic foliation $\mathcal{F}$
on $\mathbb{F}(a)$ of bidegree $(d_1,d_2)$ is a global section of 
$T\mathbb{F}(a)\otimes \mathcal{O}_{\mathbb{F}(a)}(d_1,d_2)$. Here, the radial vector fields are given by \\
$R_1 = z_{1,1}\frac{\partial}{\partial z_{1,1}}+ z_{1,2}\frac{\partial}{\partial z_{1,2}} 
+ \sum_{i=1}^{n}-a_i z_{2,i}\frac{\partial}{\partial z_{2,i}}$ and 
$R_2 = \sum_{i=1}^{n}z_{2,i}\frac{\partial}{\partial z_{2,i}}$.\\
 \item \textbf{A toric surface.} The Euler's sequence on $\mathbb{P}_{\Delta(0,2,1)}$ is given by
$$0 \rightarrow \mathcal{O}_{\mathbb{P}_{\Delta(0,2,1)}}\rightarrow\mathcal{O}_{\mathbb{P}_{\Delta(0,2,1)}}(1,\left[0\right])\oplus
\mathcal{O}_{\mathbb{P}_{\Delta(0,2,1)}}(1,\left[2\right])\oplus\mathcal{O}_{\mathbb{P}_{\Delta(0,2,1)}}(1,\left[1\right])
\rightarrow\mathcal{T}\mathbb{P}_{\Delta(0,2,1)}\rightarrow0.$$
Then, a one-dimensional holomorphic foliation $\mathcal{F}$
on $\mathbb{P}_{\Delta(0,2,1)}$ of bidegree $(\alpha,\beta)$ is a global section of 
$\mathcal{T}\mathbb{P}_{\Delta(0,2,1)}\otimes\mathcal{O}_{\mathbb{P}_{\Delta(0,2,1)}}(\alpha,\beta)$. Here, the radial vector field is given by
$R = z_1 \frac{\partial}{\partial z_1}+ z_2 \frac{\partial}{\partial z_2}+ z_3 \frac{\partial}{\partial z_3}$.\\
\end{enumerate}

\section{Poincar\'e problem}

Let $\mathbb{P}_{\Delta}=(\mathbb{C}^{n+r}-{\mathcal{Z}})/G$ be a complete simplicial toric variety  of dimension $n$.

\begin{defi}
Let $f\in \mathbb{C}[z_1,\dots,z_{n+r}]$ be a quasi-homogeneous polynomial. We say that $V=\left\{f=0\right\}\subset\mathbb{P}_{\Delta}$ is 
\emph{strongly quasi-smooth} if its tangent cone $\left\{f = 0\right\}$ on $\mathbb{C}^{n+r} - \{0\}$ is smooth. Note that strongly quasi-smooth implies quasi-smooth.
\end{defi}

Consider the $r$ linearly independent over $\mathbb{Z}$ relations among the $n_{\rho_1},\dots,n_{\rho_{n+r}}$
$$\sum_{j=1}^{n+r}a_{i,j}n_{\rho_j}=0,\,\,\, i=1,\dots,r.$$

\begin{teo}[Normal form 1]\label{Mig1}
Let $\mathbb{P}_{\Delta}$ be a complete simplicial toric variety of dimension $n$, with homogeneous coordinates 
$z_1,\ldots,z_{n+r}$.
Let $V=\left\{f=0\right\}\subset\mathbb{P}_{\Delta}$ be a strongly quasi-smooth hypersurface of degree $\alpha\in\mathcal{A}_{n-1}(\mathbb{P}_{\Delta})$. If $X$ is a quasi-homogeneous vector field that leaves $V$ invariant, then
$$X = \sum_{j<k}P_{j,k}^{\,i}
\left(\frac{\partial f}{\partial z_{j}}\frac{\partial}{\partial z_{k}}-
\frac{\partial f}{\partial z_{k}}\frac{\partial}{\partial z_{j}} \right)+
\frac{g}{\theta_i(\alpha)}\sum_{j=1}^{n+r}a_{i,j} z_{j} \frac{\partial }{\partial z_{j}}\,,\,i=1,\dots,r.$$
where $P_{j,k}^{\,i}, \, g \in \mathbb{C}[z_1,\dots,z_{n+r}]$ are quasi-homogeneous polynomials. 
Here, $\theta_i(\alpha)$ is a complex number defined in Subsection $\ref{Qsh}$.
\end{teo}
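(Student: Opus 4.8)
The plan is to reduce the statement to a computation with the syzygies of the partial derivatives $\partial f/\partial z_1,\dots,\partial f/\partial z_{n+r}$, the point being that strong quasi-smoothness forces these to be a regular sequence. Fix $i\in\{1,\dots,r\}$ and write $R_i=\sum_{j}a_{i,j}z_j\,\partial/\partial z_j$ for the associated radial vector field. By the Euler formula \eqref{eq2} we have $R_i(f)=\theta_i(\alpha)\,f$, while invariance of $V$ by $\mathcal{F}$ gives $X(f)=g\cdot f$ for some quasi-homogeneous $g$. Assuming $\theta_i(\alpha)\neq 0$ (which is in any case needed for the asserted formula to make sense), I would introduce the quasi-homogeneous vector field
$$W=X-\frac{g}{\theta_i(\alpha)}R_i,$$
which satisfies $W(f)=g f-\tfrac{g}{\theta_i(\alpha)}\theta_i(\alpha)f=0$.

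Writing $W=\sum_{j=1}^{n+r}Q_j\,\partial/\partial z_j$, the identity $W(f)=0$ reads $\sum_j Q_j\,\partial f/\partial z_j=0$; that is, $(Q_1,\dots,Q_{n+r})$ is a syzygy of the gradient $(\partial f/\partial z_1,\dots,\partial f/\partial z_{n+r})$ in $\mathrm{S}$. The core of the argument is to show that every such syzygy is a combination of the \emph{trivial} (Koszul) syzygies $\tfrac{\partial f}{\partial z_j}e_k-\tfrac{\partial f}{\partial z_k}e_j$, which correspond precisely to the vector fields $V_{j,k}=\tfrac{\partial f}{\partial z_j}\tfrac{\partial}{\partial z_k}-\tfrac{\partial f}{\partial z_k}\tfrac{\partial}{\partial z_j}$ appearing in the statement (each of which annihilates $f$).

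To obtain this I would first verify that the partials have no common zero away from the origin: if $p\neq 0$ annihilated all of them, the Euler formula \eqref{eq2} would give $\theta_i(\alpha)f(p)=R_i(f)(p)=0$, hence $f(p)=0$ since $\theta_i(\alpha)\neq 0$; but then $p$ would be a singular point of the cone $\{f=0\}$ on $\mathbb{C}^{n+r}-\{0\}$, contradicting strong quasi-smoothness. Thus the common zero locus of the partials is $\{0\}$, of codimension $n+r$. Since there are exactly $n+r$ partials and $\mathrm{S}$ is Cohen--Macaulay, they form a homogeneous system of parameters and hence a regular sequence. Consequently the Koszul complex on $\partial f/\partial z_1,\dots,\partial f/\partial z_{n+r}$ is exact, so the first syzygy module is generated by the Koszul relations; respecting the grading, this produces quasi-homogeneous polynomials $P_{j,k}^{\,i}$ with $W=\sum_{j<k}P_{j,k}^{\,i}V_{j,k}$. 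Substituting back yields
$$X=W+\frac{g}{\theta_i(\alpha)}R_i=\sum_{j<k}P_{j,k}^{\,i}\left(\frac{\partial f}{\partial z_j}\frac{\partial}{\partial z_k}-\frac{\partial f}{\partial z_k}\frac{\partial}{\partial z_j}\right)+\frac{g}{\theta_i(\alpha)}\sum_{j=1}^{n+r}a_{i,j}z_j\frac{\partial}{\partial z_j},$$
which is the asserted normal form.

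The main obstacle is the passage from the geometric hypothesis (strong quasi-smoothness) to the algebraic conclusion that the Koszul syzygies generate all syzygies. This rests on two points: that the vanishing of the gradient only at the origin forces a full-length regular sequence in the polynomial ring, and that for a regular sequence the Koszul complex is a free resolution, so that every syzygy is a combination of Koszul relations. A secondary point to handle carefully is the non-vanishing of $\theta_i(\alpha)$, which is used both in the smoothness argument and to define the radial correction term; one expects to restrict attention to those indices $i$ (equivalently, to degrees $\alpha$) for which $\theta_i(\alpha)\neq 0$.
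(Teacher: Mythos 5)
Your proposal is correct and follows essentially the same route as the paper: subtract the radial correction $\frac{g}{\theta_i(\alpha)}R_i$ using the generalized Euler formula \eqref{eq2} and the invariance relation $X(f)=g\cdot f$, observe that the resulting field is a syzygy of the gradient, and use strong quasi-smoothness to get a regular sequence so that the Koszul complex is exact and $\ker(\partial_1)=\mathrm{Im}(\partial_2)$, exactly the Zariski--Esteves argument the paper cites. You in fact fill in two details the paper leaves implicit --- the needed non-vanishing of $\theta_i(\alpha)$, and the Euler-formula step showing the partials have no common zero off the origin (hence, by Cohen--Macaulayness, form a regular sequence) --- but the underlying proof is the same.
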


\begin{proof}
We use the Koszul complex following the ideas of Zariski-Esteves \cite{Est}.
Set $X=\sum_{j=1}^{n+r} P_{j}\frac{\partial}{\partial z_{j}}$ and put 
$E_{\bullet}=\mathbb{C}[z]\otimes_{\mathbb{C}}\wedge^{\bullet}\mathbb{C}^{n+r}$, where $\mathbb{C}[z]=\mathbb{C}[z_1,\dots,z_{n+r}]$. 
Euler's generalized formula implies that 
$i_{R_i}(df)=\theta_i (\alpha) \cdot f$ for all $i=1,\dots,r$. That is
$$\theta_{i} (\alpha) \cdot f = \sum_{j=1}^{n+r} a_{i,j} z_{j} \frac{\partial f}{\partial z_{j}}\,,\,i=1,\dots,r.$$

\noindent The invariance of $V$ implies that 
$$g\cdot f = \sum_{j=1}^{n+r} P_{j} \frac{\partial f}{\partial z_{j}},$$

\noindent for some polynomial $g \in \mathbb{C}[z]$. 
From these two equations we obtain the following polynomial relationship    
$$\sum_{j=1}^{n+r} \Big(P_{j} - \frac{g}{\theta_{i}(\alpha)} \, a_{i,j} z_{j}\Big)
 \frac{\partial f}{\partial z_{j}}=0\,,\,i=1,\dots,r.$$

\noindent This identity says that the vector fields
$$X'_i=X-\frac{g}{\theta_i(\alpha)}\sum_{j=1}^{n+r}a_{i,j} z_{j} \frac{\partial }{\partial z_{j}}\,,\,i=1,\dots,r$$

\noindent satisfies $\partial_1(X'_i)=0$, that is, $X'_i \in Ker(\partial_1)$, where $\partial_1:E_1\longrightarrow E_0$ 
is the Koszul complex $\mathbb{C}[z]$-linear operator associated to
$\textsl{S}=(\frac{\partial f}{\partial z_{1}},\dots,\frac{\partial f}{\partial z_{n+r}})$, 
given by $\partial_1(\frac{\partial}{\partial z_{k}})=\frac{\partial f}{\partial z_{k}}$ for $k=1,\ldots,n+r$; see \cite[pp. 688]{Grif}. 
By hypothesis, the singular set of the hypersurface consists of 
$$\Big\{\frac{\partial f}{\partial z_{1}}=\cdots=\frac{\partial f}{\partial z_{n+r}}=0\Big\}=\{0\}.$$
Then $\textsl{S}$ is a regular sequence and consequently
$H_1(E_{\bullet}(\textsl{S}))=0$, i.e. 
$Ker(\partial_1)=Im(\partial_2)$, where $\partial_2:E_2\longrightarrow E_1$ 
is the Koszul complex $\mathbb{C}[z]$-linear operator associated to $\textsl{S}$, 
given by $\partial_2(\frac{\partial}{\partial z_{i}}\wedge\frac{\partial}{\partial z_{j}})=
\frac{\partial f}{\partial z_{i}}\frac{\partial}{\partial z_{j}}-\frac{\partial f}{\partial z_{j}}\frac{\partial}{\partial z_{i}}$ 
for $1\leq i<j\leq n+r$; see \cite[pp. 688, Lemma]{Grif}. Therefore, there exist $P_{j,k}^{\,i}\in\mathbb{C}[z]$ such that
$$X'_i=X - \frac{g}{\theta_i(\alpha)}\sum_{j=1}^{n+r}a_{i,j} z_{j} \frac{\partial }{\partial z_{j}}=
\sum_{j<k}P_{j,k}^{\,i}  
\left( \frac{\partial f}{\partial z_{j}}\frac{\partial}{\partial z_{k}}-
\frac{\partial f}{\partial z_{k}}\frac{\partial}{\partial z_{j}} \right),\,i=1,\dots,r.$$
Hence 
$$X = \sum_{j<k}P_{j,k}^{\,i}
\left( \frac{\partial f}{\partial z_{j}}\frac{\partial}{\partial z_{k}}-
\frac{\partial f}{\partial z_{k}}\frac{\partial}{\partial z_{j}} \right)+
\frac{g}{\theta_i(\alpha)}\sum_{j=1}^{n+r}a_{i,j} z_{j} \frac{\partial }{\partial z_{j}}\,,\,i=1,\dots,r.$$
\end{proof}

The Weil divisor class group $\mathcal{A}_{n-1}(\mathbb{P}_{\Delta})$ is a finitely generated abelian group of rank $r$. By the fundamental theorem of finitely generated abelian groups, we have
\begin{equation}\label{eq003}
\begin{array}{ccccccccc}
\mathcal{A}_{n-1}(\mathbb{P}_{\Delta})\simeq\mathbb{Z}^{\,r} \oplus \mathbb{Z}_{(p_1)^{\lambda_1}} \oplus \cdots \oplus \mathbb{Z}_{(p_m)^{\lambda_m}},
\end{array}
\end{equation}
where $p_i$ are primes, not necessarily distinct, and $\lambda_i$ are positive integers. The 
direct sum is unique except for possible rearrangement of the factors. 

Suppose that the homogeneous coordinate ring $\mathrm{S}$ of $\mathbb{P}_{\Delta}$ has the following property: there is a positive integer number 
$1 \leq k \leq r$ such that
\begin{equation}\label{eq3}
\begin{array}{ccccccccc}
(\deg \mathrm{P})_k \geq 0\,\,\textup{for all}\,\,\mathrm{P} \in \mathrm{S},
\end{array}
\end{equation}
where $(\deg \mathrm{P})_k$ denotes the $k$-th integer coordinate of $\deg\mathrm{P}\in\mathcal{A}_{n-1}(\mathbb{P}_{\Delta})$,
i.e. the $k$-th component of the multidegree $\deg\mathrm{P}$ in \eqref{eq003}. The first main result is the following:

\begin{teo}\label{Mig2} Let $\mathbb{P}_{\Delta}$ be a complete simplicial toric variety of dimension $n$, with homogeneous coordinates 
$z_1,\ldots,z_{n+r}$. Let $\mathcal{F}$ be a one-dimensional holomorphic foliation on $\mathbb{P}_{\Delta}$ and let $X$ be a quasi-homogeneous vector field which defines $\mathcal{F}$ in homogeneous coordinates. Let $V=\left\{f=0\right\}\subset\mathbb{P}_{\Delta}$ be a strongly quasi-smooth hypersurface invariant by $\mathcal{F}$. Then 
$$\deg(V)_k \leq \deg(\mathcal{F})_k + \max_{1\leq i<j\leq n+r} \big\{\deg(z_i)_k + \deg(z_j)_k\big\},$$
for each $1\leq k \leq r$ as in the hypothesis $(\ref{eq3})$.
\end{teo}

\begin{proof}
By Theorem \ref{Mig1}, the quasi-homogeneous vector field $X$ that defines $\mathcal{F}$ is given by 
$$X = \sum_{j<k}P_{j,k}^{\,i}
\left(\frac{\partial f}{\partial z_{j}}\underline{\frac{\partial}{\partial z_{k}}}-
\frac{\partial f}{\partial z_{k}}\frac{\partial}{\partial z_{j}} \right)+
\frac{g}{\theta_i(\alpha)}\sum_{j=1}^{n+r}a_{i,j} z_{j} \frac{\partial }{\partial z_{j}}\,,\,i=1,\dots,r.$$
where $P_{j,k}^{\,i}, \, g \in \mathbb{C}[z_1,\dots,z_{n+r}]$ are quasi-homogeneous polynomials.
As $X\notin \mathrm{Lie}(G)=\langle R_1,\dots,R_r\rangle$, we have $P_{j,k}^{\,i}\neq 0$ for some $j<k$. 
Set $\deg(\mathcal{F})=\sum_{i=1}^{n+r} d_i\left[D_i\right]$. Then 
$$\deg\Big(P_{j,k}^{\,i}\cdot \frac{\partial f}{\partial z_j}\Big)=(d_k+1)\left[D_k\right]+\sum_{i\neq k}d_i\left[D_i\right].$$
Therefore
\begin{eqnarray*}
\deg(P_{j,k}^{\,i})
&=&(d_k+1)\left[D_k\right]+\sum_{i\neq k}d_i\left[D_i\right]-\deg \big(\frac{\partial f}{\partial z_j}\big) \\
&=&(d_k+1)\left[D_k\right]+\sum_{i\neq k}d_i\left[D_i\right]-\big(\deg(V)-\left[D_j\right]\big)\\
&=&\sum_i d_i\left[D_i\right]+\left[D_k\right]+\left[D_j\right]-\deg(V)\\
&=&\deg(\mathcal{F})+\left[D_j\right]+\left[D_k\right]-\deg(V)\\
&=&\deg(\mathcal{F})+\deg(z_j)+\deg(z_k)-\deg(V).
\end{eqnarray*}
Let $1\leq \ell \leq r$ be as in the hypothesis $(\ref{eq3})$. Then 
$$0\leq\deg(P_{j,k}^{\,i})_{\ell}=\deg(\mathcal{F})_{\ell}+\deg(z_j)_{\ell}+\deg(z_k)_{\ell}-\deg(V)_{\ell}.$$
Finally, we have
$$\deg(V)_{\ell}\leq\deg(\mathcal{F})_{\ell}+\deg(z_j)_{\ell}+\deg(z_k)_{\ell}\leq
\deg(\mathcal{F})_{\ell} + \max_{1\leq j<k\leq n+r} \big\{\deg(z_j)_{\ell} + \deg(z_k)_{\ell}\big\}.$$
\end{proof}

\begin{obs} \label{Mig3} In Theorem \ref{Mig2}
\begin{enumerate}
	\item [(i)] A more precise conclusion is
	$\deg(V)_k \leq \deg(\mathcal{F})_k + \deg(z_i)_k + \deg(z_j)_k$, for some $i<j$ and for each $1\leq k \leq r$ as in the hypothesis
	$(\ref{eq3})$.
	\item [(ii)] Suppose that the homogeneous coordinate ring $\mathrm{S}$ of $\mathbb{P}_{\Delta}$ has elements with arbitrary degree, 
then we only have $\deg(P_{j,k}^{\,i})=\deg(\mathcal{F})+\deg(z_j)+\deg(z_k)-\deg(V)$, for some $j<k$.
\end{enumerate}
\end{obs}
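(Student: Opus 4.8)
The plan is to adapt the Koszul-complex argument of Theorem \ref{Mig1} to the restricted set of variables $z_{i_1},\dots,z_{i_k}$, and then repeat the degree count of Theorem \ref{Mig2} on the component singled out by hypothesis (\ref{eq3}). First I would record the two identities relating $X_1$ and $f$. Since $X_1=\sum_{j=1}^{k}P_{i_j}\frac{\partial}{\partial z_{i_j}}$ leaves $V$ invariant, there is a polynomial $h$ with $\sum_{j=1}^{k}P_{i_j}\frac{\partial f}{\partial z_{i_j}}=h\cdot f$. Property (ii) supplies the radial vector field $R_{i_1,\dots,i_k}$, which is tangent to the $G$-orbits precisely because the relation $\sum_{j=1}^{k}a_{i_j}n_{\rho_{i_j}}=0$ holds; the Euler formula (\ref{eq2}) then produces a constant $\theta$ with $\sum_{j=1}^{k}a_{i_j}z_{i_j}\frac{\partial f}{\partial z_{i_j}}=\theta\cdot f$, with $\theta\neq 0$ as in Theorem \ref{Mig1}.

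Next I would subtract $\frac{h}{\theta}$ times the Euler identity from the invariance identity to cancel $f$, obtaining the syzygy $\sum_{j=1}^{k}\big(P_{i_j}-\frac{h}{\theta}a_{i_j}z_{i_j}\big)\frac{\partial f}{\partial z_{i_j}}=0$. The role of hypothesis (i) is to make $\big(\frac{\partial f}{\partial z_{i_1}},\dots,\frac{\partial f}{\partial z_{i_k}}\big)$ a regular sequence, so that the associated Koszul complex satisfies $\ker\partial_1=\operatorname{Im}\partial_2$; see \cite[pp. 688]{Grif}. Then the vector field $X_1-\frac{h}{\theta}R_{i_1,\dots,i_k}$ lies in $\operatorname{Im}\partial_2$ and admits the normal form $\sum_{1\le j<l\le k}P_{j,l}\big(\frac{\partial f}{\partial z_{i_j}}\frac{\partial}{\partial z_{i_l}}-\frac{\partial f}{\partial z_{i_l}}\frac{\partial}{\partial z_{i_j}}\big)$, the analogue of Theorem \ref{Mig1} supported on the indices $i_1,\dots,i_k$. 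Since $X_1\notin Lie(G)$, the radial part cannot absorb all of $X_1$, so some coefficient $P_{j,l}$ is nonzero, exactly as in the proof of Theorem \ref{Mig2}.

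With a pair $j<l$ fixed so that $P_{j,l}\neq 0$, I would run the degree count of Theorem \ref{Mig2} verbatim, with $i_l$ in the role of the variable index $k$ appearing there. The summand $P_{j,l}\frac{\partial f}{\partial z_{i_j}}$ contributes to the $\frac{\partial}{\partial z_{i_l}}$-coefficient $P_{i_l}$, whose degree is $\deg(\mathcal{F})+\deg(z_{i_l})$; combined with $\deg\big(\frac{\partial f}{\partial z_{i_j}}\big)=\deg(V)-\deg(z_{i_j})$ this gives $\deg(P_{j,l})=\deg(\mathcal{F})+\deg(z_{i_j})+\deg(z_{i_l})-\deg(V)$. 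Reading off the component $\ell$ from hypothesis (\ref{eq3}) and using $\deg(P_{j,l})_{\ell}\ge 0$ yields $\deg(V)_{\ell}\le\deg(\mathcal{F})_{\ell}+\deg(z_{i_j})_{\ell}+\deg(z_{i_l})_{\ell}$, and bounding the last two summands by the maximum over pairs $1\le i_{j_1}<i_{j_2}\le k$ gives the stated inequality.

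The step I expect to be the main obstacle is the normal-form reduction, namely checking that hypothesis (i) genuinely forces $\big(\frac{\partial f}{\partial z_{i_1}},\dots,\frac{\partial f}{\partial z_{i_k}}\big)$ to be a regular sequence so that the Koszul exactness applies. Here $V$ is only quasi-smooth, so its singular locus may meet $\mathcal{Z}$ and the full gradient need not be a regular sequence as it was in Theorem \ref{Mig1}; everything rests on isolating exactly the $k$ partials of (i) together with the matching radial field of (ii), and the codimension reformulation $\operatorname{codim}\{\frac{\partial f}{\partial z_{i_1}}=\cdots=\frac{\partial f}{\partial z_{i_k}}=0\}=k$ is what secures regularity. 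Once the normal form is established, the multidegree arithmetic in $\mathcal{A}_{n-1}(\mathbb{P}_{\Delta})$ and the nonnegativity hypothesis (\ref{eq3}) finish the argument routinely.
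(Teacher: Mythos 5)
Your proposal proves the wrong statement. What is asked for is Remark \ref{Mig3}, a refinement of Theorem \ref{Mig2} (strongly quasi-smooth $V$, full gradient, normal form of Theorem \ref{Mig1}), whereas your argument --- restricted variables $z_{i_1},\dots,z_{i_k}$, the regular subsequence of partials, the radial field $R_{i_1,\dots,i_k}$, Koszul exactness on the subset --- is a reconstruction of Theorems \ref{Mig4} and \ref{Mig5}. The remark requires no new Koszul work at all: it is read off from the chain of equalities already displayed in the proof of Theorem \ref{Mig2}. There, $X\notin Lie(G)$ forces $P_{j,k}^{\,i}\neq 0$ for some \emph{specific} pair $j<k$, and quasi-homogeneity gives the identity $\deg(P_{j,k}^{\,i})=\deg(\mathcal{F})+\deg(z_j)+\deg(z_k)-\deg(V)$ unconditionally; this is item (ii), whose point is that when $\mathrm{S}$ has elements of arbitrary degree (so hypothesis (\ref{eq3}) is unavailable) no inequality can be extracted and only this equality survives. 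When (\ref{eq3}) does hold, $\deg(P_{j,k}^{\,i})_\ell\geq 0$ yields $\deg(V)_\ell\leq\deg(\mathcal{F})_\ell+\deg(z_j)_\ell+\deg(z_k)_\ell$ for that specific pair, before any maximum is taken; this is item (i).

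That said, the arithmetic you need is in fact present in your final paragraph: the computation $\deg(P_{j,l})=\deg(\mathcal{F})+\deg(z_{i_j})+\deg(z_{i_l})-\deg(V)$ for a pair with $P_{j,l}\neq 0$, followed by the component-wise nonnegativity step, is exactly the content of items (ii) and (i) --- specialize your indices to the full set ($k=n+r$, $i_j=j$) to land in the setting of Theorem \ref{Mig2}; the paper itself notes after Theorem \ref{Mig5} that an analogous remark holds in your restricted setting. So the gap is one of targeting rather than of computation: as written, your text re-proves theorems the paper already has and leaves the remark as an unflagged byproduct of an intermediate step, and you never address the actual assertion of item (ii), namely what is \emph{lost} when hypothesis (\ref{eq3}) is dropped. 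A correct proof of the remark should simply invoke the normal form of Theorem \ref{Mig1}, isolate the nonzero $P_{j,k}^{\,i}$, state the degree identity without using (\ref{eq3}) (item (ii)), and only then apply (\ref{eq3}) to obtain the pairwise inequality (item (i)).
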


\bigskip

%%%%%%%%%%%%%%%%%%%%%%%%%%%%%%%%%%%%%%%%%%%%%%%%%%%%%%%%%%%%%%%%%%%%%%%%%%%%%%%%%%%%%%%%%%%%%%%%%%%%%

There is a situation in which the above Theorem \ref{Mig2} is valid for a quasi-smooth hypersurface 
$V=\left\{f=0\right\}\subset\mathbb{P}_{\Delta}$ with
$$\left\{0\right\} \subsetneq \mathrm{Sing}(V) \subset \mathcal{Z}.$$
It is easy to understand that the same arguments in the proof of Theorems \ref{Mig1} and \ref{Mig2} 
can be applied here to obtain the Theorems \ref{Mig4} and \ref{Mig5} below

\begin{teo}[Normal form 2]\label{Mig4}
Let $\mathbb{P}_{\Delta}$ be a complete simplicial toric variety of dimension $n$, with homogeneous coordinates $z_1,\ldots,z_{n+r}$.
Let $V=\left\{f=0\right\}\subset\mathbb{P}_{\Delta}$ be a quasi-smooth hypersurface of degree 
$\alpha\in\mathcal{A}_{n-1}(\mathbb{P}_{\Delta})$. Suppose there are integer numbers $1\leq i_1<\cdots<i_k\leq n+r$ such that 
\begin{enumerate}
 	\item [(i)] there is a regular subsequence 
$\left\{\frac{\partial f}{\partial z_{i_1}},\dots,\frac{\partial f}{\partial z_{i_k}}\right\}\subset
\left\{\frac{\partial f}{\partial z_{1}},\dots,\frac{\partial f}{\partial z_{n+r}}\right\}$ and
  \item [(ii)] there is a radial vector field $R_{i_1,\ldots,i_k}=\sum_{j=1}^{k} a_{i_j}z_{i_j}\frac{\partial}{\partial z_{i_j}}$ such that
$i_{R_{i_1,\ldots,i_k}}(df)=\theta(\alpha)\cdot f$, where $\alpha\in\mathcal{A}_{n-1}(\mathbb{P}_{\Delta})$ is the degree of $f$ and 
$\theta(\alpha)$ is a constant.
\end{enumerate}
Now, let $X$ be a quasi-homogeneous vector field in $\mathbb{P}_{\Delta}$ and 
suppose that, in homogeneous coordinates, $X=X_1+X_2$, where $X_1=\sum_{j=1}^{k} P_{i_j}\frac{\partial}{\partial z_{i_j}}$
leaves $V$ invariant. Then
$$X_1 = \sum_{j_1<j_2}P_{j_1,j_2}
\left(\frac{\partial f}{\partial z_{i_{j_1}}}\frac{\partial}{\partial z_{i_{j_2}}}-
\frac{\partial f}{\partial z_{i_{j_2}}}\frac{\partial}{\partial z_{i_{j_1}}} \right)+
\frac{g}{\theta(\alpha)}\sum_{j=1}^{k}a_{i_j} z_{i_j} \frac{\partial }{\partial z_{i_j}},$$
where $P_{j_1,j_2}, \, g \in \mathbb{C}[z_1,\dots,z_{n+r}]$ are quasi-homogeneous polynomials.
\end{teo}

The property $($i$)$ is equivalent to 
$$\mathrm{codim}\left( \left\{\frac{\partial f}{\partial z_{i_1}}=\cdots=\frac{\partial f}{\partial z_{i_s}}=0\right\}\right)
=s,\,s\leq k;$$
for more details see \cite{Grif}. 

\begin{proof}
Set $E_{\bullet}=\mathbb{C}[z]\otimes_{\mathbb{C}}\wedge^{\bullet}\mathbb{C}^{k}$, where $\mathbb{C}[z]=\mathbb{C}[z_{1},\dots,z_{n+r}]$. 
The hypothesis $(\mathrm{ii})$ implies that 
$$\theta (\alpha) \cdot f = \sum_{j=1}^{k} a_{i_j} z_{i_j} \frac{\partial f}{\partial z_{i_j}}.$$

\noindent Now, the invariance of $V$ by $X_1$ implies that 
$$g\cdot f = \sum_{j=1}^{k} P_{i_j} \frac{\partial f}{\partial z_{i_j}},$$

\noindent for some polynomial $g \in \mathbb{C}[z]$. 
From these two equations we obtain the following polynomial relationship    
$$\sum_{j=1}^{k} \Big(P_{i_j} - \frac{g}{\theta(\alpha)} \, a_{i_j} z_{i_j}\Big)
 \frac{\partial f}{\partial z_{i_j}}=0.$$

\noindent This identity says that the vector fields
$$X'_1=X_1-\frac{g}{\theta(\alpha)}\sum_{j=1}^{k}a_{i_j} z_{i_j} \frac{\partial }{\partial z_{i_j}}$$

\noindent satisfies $\partial_1(X'_1)=0$, that is, $X'_1 \in Ker(\partial_1)$, where $\partial_1$ 
is the Koszul complex operator associated to
$\textsl{S}=(\frac{\partial f}{\partial z_{i_1}},\dots,\frac{\partial f}{\partial z_{i_k}})$. 
By the hypothesis $(\mathrm{i})$, we have $H_1(E_{\bullet}(\textsl{S}))=0$, i.e. 
$Ker(\partial_1)=Im(\partial_2)$, where $\partial_2$ 
is the Koszul complex operator associated to $\textsl{S}$. 
Therefore, there exist $P_{j_1,j_2}\in\mathbb{C}[z]$ such that
$$X'_1=X_1 - \frac{g}{\theta(\alpha)}\sum_{j=1}^{k}a_{i_j} z_{i_j} \frac{\partial }{\partial z_{i_j}}=
\sum_{j_1<j_2}P_{j_1,j_2} 
\left( \frac{\partial f}{\partial z_{i_{j_1}}}\frac{\partial}{\partial z_{i_{j_2}}}-
\frac{\partial f}{\partial z_{i_{j_2}}}\frac{\partial}{\partial z_{i_{j_1}}} \right).$$
Hence 
$$X_1 = \sum_{j_1<j_2}P_{j_1,j_2}
\left(\frac{\partial f}{\partial z_{i_{j_1}}}\frac{\partial}{\partial z_{i_{j_2}}}-
\frac{\partial f}{\partial z_{i_{j_2}}}\frac{\partial}{\partial z_{i_{j_1}}} \right)+
\frac{g}{\theta(\alpha)}\sum_{j=1}^{k}a_{i_j} z_{i_j} \frac{\partial }{\partial z_{i_j}}.$$
\end{proof}

The second main result is the following:

\begin{teo}\label{Mig5} Let $\mathbb{P}_{\Delta}$ be a complete simplicial toric variety of dimension $n$, with homogeneous coordinates 
$z_1,\ldots,z_{n+r}$.
Let $\mathcal{F}$ be a one-dimensional holomorphic foliation on
$\mathbb{P}_{\Delta}$ and let $X$ be a quasi-homogeneous vector field which defines $\mathcal{F}$ in homogeneous coordinates. 
Let $V=\left\{f=0\right\}\subset\mathbb{P}_{\Delta}$ be a quasi-smooth hypersurface. Suppose there are integers numbers
$1\leq i_1<\cdots<i_k\leq n+r$ with the properties $($\emph{i}$)$ and $($\emph{ii}$)$ as in the above Theorem $\ref{Mig4}$.
Moreover assume that, in homogeneous coordinates, $X=X_1+X_2$, where $X_1=\sum_{j=1}^{k} P_{i_j}\frac{\partial}{\partial z_{i_j}}$
leaves $V$ invariant and such that $X_1\notin \mathrm{Lie}(G)$. Then              
$$\deg(V)_{\ell} \leq \deg(\mathcal{F})_{\ell} + \max_{1\leq j_1< j_2\leq k} \big\{\deg(z_{i_{j_1}})_{\ell} 
+ \deg(z_{i_{j_2}})_{\ell}\big\},$$
for each $1\leq \ell \leq r$ as in the hypothesis $(\ref{eq3})$.
\end{teo}

\begin{proof}
By Theorem \ref{Mig4}, the quasi-homogeneous vector field $X_1$ is given by 
$$X_1 = \sum_{j_1<j_2}P_{j_1,j_2}
\left(\frac{\partial f}{\partial z_{i_{j_1}}}\underline{\frac{\partial}{\partial z_{i_{j_2}}}}-
\frac{\partial f}{\partial z_{i_{j_2}}}\frac{\partial}{\partial z_{i_{j_1}}} \right)+
\frac{g}{\theta(\alpha)}\sum_{j=1}^{k}a_{i_j} z_{i_j} \frac{\partial }{\partial z_{i_j}}.$$
where $P_{j_1,j_2}, \, g \in \mathbb{C}[z_1,\dots,z_{n+r}]$ are quasi-homogeneous polynomials.
As $X_1\notin \mathrm{Lie}(G)$, we have $P_{j_1,j_2}\neq 0$ for some $j_1<j_2$. 
Set $\deg(\mathcal{F})=\sum_{i=1}^{n+r} d_i\left[D_i\right]$. Then 
$$\deg\Big(P_{j_1,j_2}\cdot \frac{\partial f}{\partial z_{i_{j_1}}}\Big)=
\left(d_{i_{j_2}}+1\right)\left[D_{i_{j_2}}\right]+\sum_{i\neq i_{j_2}}d_i\left[D_i\right].$$
We can now proceed analogously to the proof of Theorem \ref{Mig2}, so we have 
$$\deg(P_{j_1,j_2})=\deg(\mathcal{F})+\deg(z_{i_{j_1}})+\deg(z_{i_{j_2}})-\deg(V).$$
Let $1\leq \ell \leq r$ be as in the hypothesis $(\ref{eq3})$. Then 
$$0\leq\deg(P_{j_1,j_2})_{\ell}=\deg(\mathcal{F})_{\ell}+\deg(z_{i_{j_1}})_{\ell}+\deg(z_{i_{j_2}})_{\ell}-\deg(V)_{\ell}.$$
Finally, we have
$$\deg(V)_{\ell}\leq\deg(\mathcal{F})_{\ell}+\deg(z_{i_{j_1}})_{\ell}+\deg(z_{i_{j_2}})_{\ell}\leq
\deg(\mathcal{F})_{\ell} + \max_{1\leq j_1< j_2\leq k} \big\{\deg(z_{i_{j_1}})_{\ell} 
+ \deg(z_{i_{j_2}})_{\ell}\big\}.$$
\end{proof}

It is worth mentioning that an analogous remark to Remark \ref{Mig3} is valid here

%%%%%%%%%%%%%%%%%%%%%%%%%%%%%%%%%%%%%%%%%%%%%%%%%%%%%%%%%%%%%%%%%%%%%%%%%%%%%%%%%%%%%%%%%%%%%%%%%%%%%

\bigskip

\section{Applications} \label{App}

In this section we use Theorem \ref{Mig2} to give an upper bound for the degree of a strongly quasi-smooth hypersurface invariant by 
$\mathcal{F}$ on weighted projective spaces, multiprojective spaces, rational normal scrolls, and for a compact toric orbifold surface with Weil divisor class group having torsion $\mathbb{P}_{\Delta(0,2,1)}$. According to Theorem \ref{Mig5}, it is clear that analogous results can be obtained for quasi-smooth hypersurfaces. We build several families of one-dimensional foliations where the upper bound is sharp; see Examples \ref{Wps2}, \ref{Ms2}, \ref{Tor2}, \ref{A} and \ref{B}.

\begin{cor}[Weighted projective spaces]\label{Wps1} Let $\mathcal{F}$ be a one-dimensional holomorphic foliation on
$\mathbb{P}(\omega)$ and let $X$ be a quasi-homogeneous vector field which defines $\mathcal{F}$ in homogeneous coordinate. 
Let $V \subset \mathbb{P}(\omega)$ be a quasi-smooth hypersurface invariant by $\mathcal{F}$. Then  
$$\deg(V) \leq \deg(\mathcal{F}) + \max_{0\leq i<j\leq n} \{\omega_i + \omega_j\}.$$
\end{cor}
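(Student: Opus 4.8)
The plan is to obtain this corollary as a direct specialization of Theorem \ref{Mig2} to the weighted projective space $\mathbb{P}(\omega)$, where the combinatorial data are completely explicit and the rank of the class group drops to one. First I would record the structural facts from Example \ref{exe1}: here $\Delta$ is simplicial with exactly $n+1$ rays and $\dim \mathbb{P}(\omega) = n$, so $r = |\Delta(1)| - n = 1$; moreover $\mathcal{A}_{n-1}(\mathbb{P}(\omega)) \simeq \mathbb{Z}$ with $\deg(z_i) = \omega_i$ and each $\omega_i > 0$. The hypothesis $X \notin Lie(G)$ is assumed in the corollary exactly as it is required in Theorem \ref{Mig2}, so it carries over with no further work.

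The key point I would establish is that on $\mathbb{P}(\omega)$ the notions of quasi-smooth and strongly quasi-smooth coincide, so that a merely quasi-smooth invariant $V$ already satisfies the stronger hypothesis of Theorem \ref{Mig2}. This follows because, $\Delta$ being simplicial with $n+1$ rays and the class group having rank one, we have $\mathcal{Z} = \{0\}$; hence the tangent cone $\{f = 0\}$ on $\mathbb{C}^{n+1} - \mathcal{Z}$ is literally the same as its tangent cone on $\mathbb{C}^{n+1} - \{0\}$, and smoothness of one is smoothness of the other. I would then verify the positivity hypothesis (\ref{eq3}): since $r = 1$ there is a single coordinate $k = 1$, and the grading $\mathrm{S} = \bigoplus_{\alpha \geq 0} \mathrm{S}_\alpha$ arising from the positive weights $\omega_i$ forces $(\deg \mathrm{P})_1 = \deg \mathrm{P} \geq 0$ for every $\mathrm{P} \in \mathrm{S}$. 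Thus (\ref{eq3}) holds with $k = 1 = r$, and Theorem \ref{Mig2} applies to the pair $(\mathcal{F}, V)$.

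Finally I would perform the bookkeeping of the specialization. Because $r = 1$, every multidegree is a single integer, so $\deg(V)_1 = \deg(V)$, $\deg(\mathcal{F})_1 = \deg(\mathcal{F})$, and $\deg(z_i)_1 = \omega_i$; relabeling the $n+r = n+1$ homogeneous coordinates as $z_0, \dots, z_n$ turns the index range $1 \leq i < j \leq n+r$ of Theorem \ref{Mig2} into $0 \leq i < j \leq n$. Substituting these identifications into the bound of Theorem \ref{Mig2} yields
$$\deg(V) \leq \deg(\mathcal{F}) + \max_{0 \leq i < j \leq n}\{\omega_i + \omega_j\},$$
which is the asserted inequality. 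The only step requiring genuine care is the identification of quasi-smooth with strongly quasi-smooth, and this reduces entirely to the computation $\mathcal{Z} = \{0\}$; once that is in hand the remainder is a routine translation of notation, so I do not anticipate any substantial obstacle.
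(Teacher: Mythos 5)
Your proposal is correct and follows essentially the same route as the paper, which also obtains the corollary as a direct specialization of Theorem \ref{Mig2} using $\mathcal{A}_{n-1}(\mathbb{P}(\omega))\simeq\mathbb{Z}$, $\deg(z_i)=\omega_i$, and the description of $\mathcal{F}$ as a section of $\mathcal{T}\mathbb{P}(\omega)\otimes\mathcal{O}_{\mathbb{P}(\omega)}(d)$. You in fact supply details the paper's one-line proof leaves implicit --- notably that $\mathcal{Z}=\{0\}$ forces quasi-smooth and strongly quasi-smooth to coincide when $r=1$, and the verification of hypothesis (\ref{eq3}) from the positivity of the weights --- both of which match remarks made earlier in the paper.
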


\begin{proof} 
Follow directly from Theorem \ref{Mig2}, because $\mathcal{A}_{n-1}(\mathbb{P}_{\omega})\simeq\mathbb{Z}$, 
$\deg(z_i)=\omega_i$ for all $0\leq i\leq n$ and the one-dimensional holomorphic foliation $\mathcal{F}$
on $\mathbb{P}(\omega)$ is a global section of  
$\mathcal{T}\mathbb{P}(\omega)\otimes\mathcal{O}_{\mathbb{P}(\omega)}(d)$. 
\end{proof}

Now, let us consider the family of examples of one-dimensional holomorphic foliations in \cite{MiMaFa}

\begin{exe}\label{Wps2}
Let $\omega_0,\omega_1,\dots ,\omega_{2m+1}$ be positive integers with $gcd(\omega_0,\dots,\omega_{2m+1})=1$ and $\xi$  such that
$$\xi=\omega_{2j}+\omega_{2j+1}\,\,\textup{for all}\,\,j=0,1,\dots,m.$$
Let us consider the weighted projective space $\mathbb{P}^{n}(\omega_{0},\dots,\omega_{n})$, where $n=2m+1$
and $\mathcal{F}$ a holomorphic foliation on $\mathbb{P}^{n}(\omega_{0}\dots,\omega_{n})$ induced by the quasi-homogeneous vector field with isolated singularities given by 
$$X=\sum_{k=0}^{m}\Big(d_{2k+1}z_{2k+1}^{d_{2k+1}-1}\frac{\partial}{\partial z_{2k}}-d_{2k}z_{2k}^{d_{2k}-1}\frac{\partial}{\partial z_{2k+1}}\Big),$$
where  $d_{0},\dots, d_{n} \in \mathbb{N}$ and $\zeta$ satisfy the relation
$$\zeta=\omega_{k}d_{k}\,\,\textup{for all}\,\,k=0,\dots,n.$$	
For each $c=(c_0:\cdots:c_m)\in \mathbb{P}^m$,  $V_c$ is the quasi-smooth hypersurface on 
$\mathbb{P}^{n}(\omega_{0},\dots,\omega_{n})$ of degree $\zeta$ given by 
$$
V_{c}=\left\{\sum_{k=0}^{m}c_k\big(z_{2k}^{d_{2k}}+z_{2k+1}^{d_{2k+1}}\big)=0\right\}.
$$
We can see that $V_{c}$ is invariant by $\mathcal{F}$ and $\deg (\mathcal{F}) = \zeta - \xi$. Then 
$$\deg (V_c)-\deg(\mathcal{F})= \xi \leq \max_{0\leq i<j\leq n} \{\omega_i + \omega_j\}.$$

Consider a similar foliation on even dimensional weighted projective spaces $\mathbb{P}^{n}(\omega_{0},\dots,\omega_{n})$ where 
$n=2m +2$ and $\xi=\omega_{2k}+\omega_{2k+1}\,\,\textup{for all}\,\,k=0,\dots,m$. Suppose that 
$\zeta=\omega_{k}d_{k}$ for all $\, k=0,\dots,n$ and consider the vector field $X$ of the previous example. Thus, for each 
$c=(c_0:\cdots:c_{m+1})\in \mathbb{P}^{m+1}$, the quasi-smooth hypersurface on $ \mathbb{P}^{n}(\omega_{0},\dots,\omega_{n})$ 
of degree $\zeta$ given by 
$$
V_{c}= \left\{\sum_{k=0}^{m}c_k\big(z_{2k}^{d_{2k}}+z_{2k+1}^{d_{2k+1}}\big)+c_{m+1}z_{n}^{d_{n}}=0\right\}
$$
is invariant by $X$ and therefore we obtain the same conclusions 
$$\deg(V_c) - \deg (\mathcal F) = \xi \leq \max_{0\leq i<j\leq n} \{\omega_i + \omega_j\}.$$

We see that the upper bound for Poincar\'e problem is optimal. Compare with Remark \ref{Mig3}-(i).
\end{exe}

\begin{cor}[Multiprojective spaces]\label{Ms1} Let $\mathcal{F}$ be a one-dimensional holomorphic foliation on
$\mathbb{P}^{n_1}\times\cdots\times\mathbb{P}^{n_r}$ $(r>1)$ and let $X$ be a quasi-homogeneous vector field which defines $\mathcal{F}$ in homogeneous coordinates. Let $V \subset \mathbb{P}^{n_1}\times\cdots\times\mathbb{P}^{n_r}$ be a strongly 
quasi-smooth hypersurface invariant by $\mathcal{F}$. Then 
$$\deg(V)_k \leq \deg(\mathcal{F})_k + 2,\,\,1\leq k \leq r.$$
\end{cor}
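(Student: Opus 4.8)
The plan is to derive the inequality as an immediate specialization of Theorem \ref{Mig2}, in exactly the same spirit as Corollary \ref{Wps1}; the only real work is to unwind the grading of the homogeneous coordinate ring of a product of projective spaces and to evaluate the maximum appearing in that theorem.

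First I would record the combinatorial data, generalizing Example $(\ref{exe2})$ from the case $\mathbb{P}^n\times\mathbb{P}^m$. For $\mathbb{P}_{\Delta}=\mathbb{P}^{n_1}\times\cdots\times\mathbb{P}^{n_r}$ one has $\mathcal{A}_{n-1}(\mathbb{P}_{\Delta})\simeq\mathbb{Z}^r$, a free abelian group of rank $r$, and the $n_i+1$ homogeneous coordinates $z_{i,0},\dots,z_{i,n_i}$ attached to the $i$-th factor all have degree equal to the $i$-th standard basis vector $e_i\in\mathbb{Z}^r$. In particular $\deg(z_{i,j})_k=\delta_{ik}\in\{0,1\}$ for every index $k$, where $\delta_{ik}$ is the Kronecker delta.

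Next I would check that the hypothesis $(\ref{eq3})$ holds for every $1\leq k\leq r$. Since each monomial of $\mathrm{S}$ is a product of the $z_{i,j}$ and each $\deg(z_{i,j})$ has only nonnegative entries, we get $(\deg\mathrm{P})_k\geq 0$ for all $\mathrm{P}\in\mathrm{S}$ and all $k$; thus every coordinate $k$ is admissible. Because $X\notin Lie(G)$ and $V$ is strongly quasi-smooth, Theorem \ref{Mig2} applies and yields, for each $k$,
$$\deg(V)_k \leq \deg(\mathcal{F})_k + \max_{a\neq b}\big\{\deg(z_a)_k+\deg(z_b)_k\big\},$$
the maximum being taken over all pairs of distinct homogeneous coordinates.

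Finally I would evaluate this maximum. As every $\deg(z_a)_k\in\{0,1\}$, the sum $\deg(z_a)_k+\deg(z_b)_k$ is at most $2$, and the value $2$ is attained exactly when both coordinates come from the $k$-th factor $\mathbb{P}^{n_k}$. Such a pair always exists, since $\mathbb{P}^{n_k}$ contributes $n_k+1\geq 2$ homogeneous coordinates, each of $k$-th degree component equal to $1$. Hence the maximum equals $2$ and the desired bound $\deg(V)_k\leq\deg(\mathcal{F})_k+2$ follows for every $1\leq k\leq r$. There is no genuine obstacle beyond this bookkeeping: the hypothesis $X\notin Lie(G)$ is precisely what Theorem \ref{Mig2} requires, so the statement is a one-line corollary once the grading is made explicit.
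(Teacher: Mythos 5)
Your proposal is correct and follows essentially the same route as the paper's proof: identify $\mathcal{A}_{n-1}(\mathbb{P}^{n_1}\times\cdots\times\mathbb{P}^{n_r})\simeq\mathbb{Z}^r$ with $\deg(z_{i,j})=e_i$, and apply Theorem \ref{Mig2} to get the bound with the maximum evaluating to $2$. You merely spell out two steps the paper leaves implicit --- that hypothesis (\ref{eq3}) holds for every $k$ and that the maximum is attained by a pair of coordinates from the $k$-th factor --- which is harmless and slightly more complete.
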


\begin{proof}
In homogeneous coordinates $z=(z_1,\dots,z_r)\in\mathbb{C}^{(n_1+\cdots+n_r)+r}$ with $z_i=(z_{i,0},\dots,z_{i,n_i})\in\mathbb{C}^{n_{i}+1}$,
we have $\deg(z_{i,j})=\left(0,\dots,1_i,\dots,0\right)$, for all $0\leq j \leq n_i$. Then the corollary follows directly from Theorem 
\ref{Mig2}, because $\mathcal{A}_{n_1+\cdots+n_r-1}(\mathbb{P}^{n_1}\times\cdots\times\mathbb{P}^{n_r})\simeq\mathbb{Z}^r$ and one-dimensional holomorphic foliation $\mathcal{F}$ on $\mathbb{P}^{n_1}\times\cdots\times\mathbb{P}^{n_r}$ of multidegree $(\alpha_1,\dots,\alpha_r)$
is a global section of $T(\mathbb{P}^{n_1}\times\cdots\times\mathbb{P}^{n_r})
\otimes\mathcal{O}_{\mathbb{P}^{n_1}\times\cdots\times\mathbb{P}^{n_r}}(\alpha_{1},\dots,\alpha_{r})$.
\end{proof}

Now, let us consider a family of examples of one-dimensional holomorphic foliations in $\mathbb{P}^{n}\times\mathbb{P}^{n}$

\begin{exe}\label{Ms2}
In homogeneous coordinates $z_{1,0},z_{1,1},\ldots z_{1,n}, z_{2,0},z_{2,1},\ldots z_{2,n}$ with $n=2m+1$, let us consider the multiprojective space $\mathbb{P}^{n}\times\mathbb{P}^{n}$ and $\mathcal{F}$ a holomorphic foliation on $\mathbb{P}^{n}\times\mathbb{P}^{n}$ induced by the quasi-homogeneous vector field
$$X=\sum_{k=0}^{m}a_kz_{1,2k}^2\Big(z_{2,2k+1}\frac{\partial}{\partial z_{1,2k}}-z_{2,2k}\frac{\partial}{\partial z_{1,2k+1}}\Big)+
\sum_{k=0}^{m}b_kz_{2,2k}^2\Big(z_{1,2k+1}\frac{\partial}{\partial z_{2,2k}}-z_{1,2k}\frac{\partial}{\partial z_{2,2k+1}}\Big),$$
where $a=(a_0:\ldots:a_m), b=(b_0:\ldots:b_m)\in\mathbb{P}^{m}$.
Consider the strongly quasi-smooth hypersurface on $\mathbb{P}^n\times\mathbb{P}^n$ invariant by $\mathcal{F}$ given by 
$$V=\left\{\sum_{k=0}^{n}z_{1,k}z_{2,k}=0\right\}.$$
It is easy see that $\deg(V)=\deg(\mathcal{F}) = (1,1)$ and so 
$$\deg(V)_k \leq \deg(\mathcal{F})_k + 2,\,\,k=1, 2.$$
\end{exe}

\begin{cor}[Rational normal scrolls]\label{Rs} Let $\mathcal{F}$ be a one-dimensional holomorphic foliation on
$\mathbb{F}(a_1,\dots,a_n)$ and let $X$ be a quasi-homogeneous vector field which defines $\mathcal{F}$ in homogeneous coordinates. 
Let $V\subset\mathbb{F}(a_1,\dots,a_n)$ be a strongly 
quasi-smooth hypersurface invariant by $\mathcal{F}$. Then
\begin{enumerate}
	\item [(i)] if $n=1$, then $\deg(V)_2\leq\deg(\mathcal{F})_2+1,$
	\item [(ii)] if $n>1$, then $\deg(V)_2\leq\deg(\mathcal{F})_2+2.$
\end{enumerate}
Moreover, suppose that $a_1,\dots,a_n$ are non-positive integers, we have 
\begin{enumerate}
	\item [(i)] if $a_1=a_2=\cdots=a_n=0$, then $\deg(V)_1\leq\deg(\mathcal{F})_1+2,$
	\item [(ii)] if $a_2=\cdots=a_n=0$ and $a_1$ is negative, then $\deg(V)_1\leq\deg(\mathcal{F})_1+1-a_1,$
	\item [(iii)] if at least two of the $a_i$ are negatives, then 
$\deg(V)_1\leq\deg(\mathcal{F})_1-\min_{1\leq i<j \leq n}\left\{a_i+a_j\right\}.$
\end{enumerate}
\end{cor}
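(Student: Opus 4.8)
The plan is to specialize Theorem \ref{Mig2} to $\mathbb{F}(a_1,\dots,a_n)$, where $r=2$ and the two components of the degree are completely explicit, so that the whole statement reduces to evaluating a maximum of pairwise sums of coordinate degrees. Recall from Example \ref{exe3} that $\mathcal{A}_{n-1}(\mathbb{F}(a_1,\dots,a_n))\simeq\mathbb{Z}^2$, that the $n+2$ homogeneous coordinates are $z_{1,1},z_{1,2},z_{2,1},\dots,z_{2,n}$, and that $\deg(z_{1,1})=\deg(z_{1,2})=(1,0)$ while $\deg(z_{2,i})=(-a_i,1)$; moreover a foliation $\mathcal{F}$ of bidegree $(d_1,d_2)$ is a global section of $T\mathbb{F}(a)\otimes\mathcal{O}_{\mathbb{F}(a)}(d_1,d_2)$. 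Once the positivity hypothesis $(\ref{eq3})$ is checked for the relevant component $k$, Theorem \ref{Mig2} gives
$$\deg(V)_k\leq\deg(\mathcal{F})_k+\max_{i<j}\big\{\deg(z_i)_k+\deg(z_j)_k\big\},$$
the maximum running over all pairs among the $n+2$ coordinates, and every assertion follows by computing this maximum in each component and under each hypothesis on the $a_i$.

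First I would treat the second component $k=2$. The second coordinates of the coordinate degrees are $0,0,1,\dots,1$, with the value $1$ occurring $n$ times, so $(\deg\mathrm{P})_2\geq 0$ for all $\mathrm{P}\in\mathrm{S}$ and hypothesis $(\ref{eq3})$ holds unconditionally for $k=2$. If $n=1$ the only variable of second degree $1$ is $z_{2,1}$, so the largest pairwise sum is $0+1=1$, obtained by pairing $z_{2,1}$ with $z_{1,1}$ or $z_{1,2}$; this gives (i). If $n>1$ there are at least two variables of second degree $1$, so pairing $z_{2,1}$ with $z_{2,2}$ yields the maximal sum $1+1=2$, giving (ii).

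Next I would treat the first component $k=1$, whose coordinate degrees are $1,1,-a_1,\dots,-a_n$. Here $(\deg\mathrm{P})_1\geq 0$ for all $\mathrm{P}\in\mathrm{S}$ forces $-a_i\geq 0$, i.e. $a_i\leq 0$, which is exactly why the ``moreover'' part assumes the $a_i$ non-positive. The three sub-cases are then a direct evaluation of $\max_{i<j}\{\deg(z_i)_1+\deg(z_j)_1\}$, whose candidate values are $1+1=2$ (pair $z_{1,1},z_{1,2}$), $1+(-a_i)=1-a_i$ (pair $z_{1,\bullet},z_{2,i}$), and $(-a_i)+(-a_j)=-(a_i+a_j)$ (pair $z_{2,i},z_{2,j}$). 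When all $a_i=0$ these become $2,1,0$, so the maximum is $2$ and we get (i). When only $a_1$ is negative the relevant candidates are $2$ and $1-a_1$; since $a_1\leq -1$ one has $1-a_1\geq 2$, so the maximum is $1-a_1$, giving (ii). When at least two of the $a_i$ are negative the maximum is attained by the two most negative entries and equals $-\min_{1\leq i<j\leq n}\{a_i+a_j\}$, giving (iii).

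The only step needing a little care — and hence the main (very mild) obstacle — is confirming in case (iii) that the pair of most negative entries really dominates the other candidates. Writing $a_{(1)}\leq a_{(2)}\leq -1$ for the two smallest entries, the claimed value is $-(a_{(1)}+a_{(2)})$. Since $-a_{(1)}\geq 1$ and $-a_{(2)}\geq 1$ we have $-(a_{(1)}+a_{(2)})\geq 2$, so it beats the pair $z_{1,1},z_{1,2}$; and $-(a_{(1)}+a_{(2)})-(1-a_{(1)})=-a_{(2)}-1\geq 0$, so it also beats the largest mixed sum $1-a_{(1)}$ and hence every mixed sum $1-a_i$. This establishes $\max_{i<j}\{\deg(z_i)_1+\deg(z_j)_1\}=-\min_{1\leq i<j\leq n}\{a_i+a_j\}$ and completes the case analysis.
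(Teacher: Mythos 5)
Your proposal is correct and takes essentially the same route as the paper: both specialize Theorem \ref{Mig2} via the explicit degrees $\deg(z_{1,1})=\deg(z_{1,2})=(1,0)$ and $\deg(z_{2,i})=(-a_i,1)$ from Example $3.4$-(\ref{exe3}), checking hypothesis $(\ref{eq3})$ for $k=2$ unconditionally (every bidegree has $\beta\geq 0$) and for $k=1$ under non-positivity of the $a_i$ (so $\alpha\geq 0$). You merely write out in full the elementary maximum computations, including the dominance check in case (iii), which the paper compresses into ``follows directly from Theorem \ref{Mig2}.''
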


\begin{proof} Here $\mathcal{A}_{n-1}(\mathbb{F}(a_1,\dots,a_n))=\mathbb{Z}^2$ and every bihomogeneous polynomial of bidegree 
$(\alpha,\beta)$ has $\beta\geq 0$. Then, the corollary follows directly from Theorem \ref{Mig2} and Example $3.4$-(\ref{exe3}), because 
$$\deg(V)_2\leq\deg(\mathcal{F})_2+\max_{1\leq i<j \leq n+2}\left\{\deg(z_i)_2+\deg(z_j)_2\right\}.$$
Moreover, if $a_1,\dots,a_n$ are non-positive integers, the corollary follows directly from Theorem \ref{Mig2} and Example 
$3.4$-(\ref{exe3}), because $\alpha\geq 0$ and 
$$\deg(V)_1\leq\deg(\mathcal{F})_1+\max_{1\leq i<j \leq n+2}\left\{\deg(z_i)_1+\deg(z_j)_1\right\}.$$
\end{proof}

\begin{cor}[A toric surface]\label{Tor1} Let $\mathcal{F}$ be a one-dimensional holomorphic foliation on $\mathbb{P}_{\Delta(0,2,1)}$ and let 
$X$ be a quasi-homogeneous vector field which defines $\mathcal{F}$ in homogeneous coordinates. 
Let $V\subset\mathbb{P}_{\Delta(0,2,1)}$ be a quasi-smooth hypersurface invariant by $\mathcal{F}$. Then
$$\deg(V)_1\leq\deg(\mathcal{F})_1+2.$$
\end{cor}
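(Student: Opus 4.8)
The plan is to apply Theorem \ref{Mig2} directly to the specific toric surface $\mathbb{P}_{\Delta(0,2,1)}$, whose combinatorial data has already been computed in Example $3.4$-(\ref{exe4}). The key structural facts I would invoke are: the Weil divisor class group is $\mathcal{A}_1(\mathbb{P}_{\Delta(0,2,1)})\simeq\mathbb{Z}\oplus\mathbb{Z}_3$, which has rank $r=1$; and the degrees of the homogeneous coordinates are $\deg(z_1)=(1,[0])$, $\deg(z_2)=(1,[2])$ and $\deg(z_3)=(1,[1])$. The crucial observation is that the first coordinate of every degree in $\mathrm{S}$ is nonnegative, since $\mathrm{S}=\oplus_{\alpha\geq 0,\,\beta\in\mathbb{Z}_3}\mathrm{S}_{(\alpha,\beta)}$; this is precisely the hypothesis $(\ref{eq3})$ with $k=1$.

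First I would verify that the hypotheses of Theorem \ref{Mig2} are met. Since $\mathbb{P}_{\Delta(0,2,1)}$ is a compact orbifold toric surface (hence a complete simplicial toric variety of dimension $n=2$), and since $V$ is assumed quasi-smooth with $X\notin Lie(G)$, the only point requiring attention is that Theorem \ref{Mig2} as stated treats \emph{strongly} quasi-smooth hypersurfaces, whereas here $V$ is merely quasi-smooth. However, because the rank of $\mathcal{A}_1(\mathbb{P}_{\Delta(0,2,1)})$ is one, the notions of quasi-smooth and strongly quasi-smooth coincide: indeed $\mathcal{Z}=\{0\}$ when $r=1$, as noted in the discussion of $\mathcal{Z}$ preceding Theorem \ref{Cox}, so the tangent cone on $\mathbb{C}^3-\mathcal{Z}=\mathbb{C}^3-\{0\}$ being smooth is exactly the strong quasi-smoothness condition. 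This reconciles the hypotheses.

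Once Theorem \ref{Mig2} applies with $k=1$, the conclusion reads
$$\deg(V)_1 \leq \deg(\mathcal{F})_1 + \max_{1\leq i<j\leq 3}\big\{\deg(z_i)_1 + \deg(z_j)_1\big\}.$$
It then remains to compute the maximum. Since the first coordinates of $\deg(z_1),\deg(z_2),\deg(z_3)$ are all equal to $1$, every pairwise sum $\deg(z_i)_1+\deg(z_j)_1$ equals $1+1=2$, so the maximum is $2$. Substituting gives $\deg(V)_1\leq\deg(\mathcal{F})_1+2$, which is the desired bound.

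The step most likely to need care is not the arithmetic but the justification that Theorem \ref{Mig2} is legitimately applicable to a merely quasi-smooth (rather than strongly quasi-smooth) hypersurface. The resolution rests entirely on the rank-one identity $\mathcal{Z}=\{0\}$; I would make sure to cite this explicitly, since it is the hinge that collapses the two smoothness notions and lets the strongly-quasi-smooth version of the theorem cover the present case. Everything else is a direct substitution of the degree data from Example $3.4$-(\ref{exe4}).
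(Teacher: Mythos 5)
Your proposal is correct and follows essentially the same route as the paper, which likewise deduces the corollary directly from Theorem \ref{Mig2} using that $\mathcal{A}_{1}(\mathbb{P}_{\Delta(0,2,1)})\simeq\mathbb{Z}\oplus\mathbb{Z}_3$, that every degree has nonnegative first component (hypothesis (\ref{eq3}) with $k=1$), and that $\deg(z_i)_1=1$ for all $i$, giving the maximum $2$. Your explicit reconciliation of quasi-smooth versus strongly quasi-smooth via $\mathcal{Z}=\{0\}$ in the rank-one case is a point the paper's one-line proof leaves implicit (it is stated only in the introduction), so you have, if anything, been more careful than the original.
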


\begin{proof} Here $\mathcal{A}_{1}(\mathbb{P}_{\Delta(0,2,1)})=\mathbb{Z}\oplus\mathbb{Z}_3$ and every bihomogeneous polynomial of bidegree 
$(\alpha,\beta)$ has $\alpha\geq 0$. Then, the corollary follows directly from Theorem \ref{Mig2}.
\end{proof}

Now, let us consider a family of examples of one-dimensional holomorphic foliations in $\mathbb{P}_{\Delta(0,2,1)}$

\begin{exe}\label{Tor2}
Consider a toric surface $\mathbb{P}_{\Delta(0,2,1)}$. Let $m$ be a positive integer with $m\equiv 0\,\textup{mod}\,3$ and $\mathcal{F}$ a holomorphic foliation on $\mathbb{P}_{\Delta(0,2,1)}$, induced by the quasi-homogeneous vector field
$$X = z_2^m \frac{\partial}{\partial z_1}+ z_1(z_3^{m-1}-z_1^{m-2}z_2) \frac{\partial}{\partial z_2}-
z_1z_2^{m-1} \frac{\partial}{\partial z_3}.$$
We have $\mathrm{Sing}(\mathcal{F})=\left\{(1:a:-a^{-1})\in(\mathbb{C^*})^{3}\,|\,a^{2m}+a^m-1=0\right\}$.
Consider the quasi-smooth hypersurface on $\mathbb{P}_{\Delta(0,2,1)}$ invariant by $\mathcal{F}$ given by
$$V=\left\{z_1^m+z_2^m+z_3^m=0\right\}.$$
It is easy seen that $\deg(\mathcal{F}) = (m-1,\left[0\right])$ and $\deg(V)=(m,\left[0\right])$, and so 
$\deg(V)_1=m=\deg(\mathcal{F})_1+1<\deg(\mathcal{F})_1+2$.
On the other hand, according to Theorem \ref{Mig1}, we have $P_{1,2}=-\frac{1}{m}z_2$, $P_{1,3}=0$ and $P_{2,3}=-\frac{1}{m}z_1$. 
Hence
$$\deg(P_{1,2})=(1,\left[2\right])=(m-1,\left[0\right])+(1,\left[0\right])+(1,\left[2\right])-(m,\left[0\right])=\deg(\mathcal{F})+\deg(z_1)+\deg(z_2)-\deg(V),$$
and
$$\deg(P_{2,3})=(1,\left[0\right])=(m-1,\left[0\right])+(1,\left[2\right])+(1,\left[1\right])-(m,\left[0\right])=\deg(\mathcal{F})+\deg(z_2)+\deg(z_3)-\deg(V).$$
Compare with Remark \ref{Mig3}-(ii).
\end{exe}

The next example is an application of Theorems \ref{Mig4} and \ref{Mig5}

\begin{exe}\label{A}
In homogeneous coordinates $z_{1,0}, z_{1,1}, z_{2,0}, z_{2,1}$, let us consider the multiprojective space 
$$\mathbb{P}^1\times\mathbb{P}^1=\left(\mathbb{C}^{2}\times\mathbb{C}^{2}-\mathcal{Z}\right)\,/\,(\mathbb{C}^{*})^2,$$
where $\mathcal{Z} = \left(\left\{0\right\}\times\mathbb{C}^{2}\right)\cup\left(\mathbb{C}^{2}\times\left\{0\right\}\right)$.
Let $\alpha_1$, $\alpha_2$, $\alpha$ be positive integers with $\alpha=\alpha_1+\alpha_2$ and $c_1,c_2\in\mathbb{C}^{\ast}$. 
Let $\mathcal{F}$ be a holomorphic foliation
on $\mathbb{P}^1\times\mathbb{P}^1$ induced by the quasi-homogeneous vector field $X=X_1+X_2$, where
$$X_1=c_1z_{1,0}^2z_{2,1}^{\alpha}\frac{\partial}{\partial z_{1,0}}-c_1z_{1,0}^2z_{2,0}^{\alpha_1}
(z_{2,0}^{\alpha_2}+z_{2,1}^{\alpha_2})\frac{\partial}{\partial z_{1,1}}$$
and
$$X_2=c_2z_{2,0}^2z_{2,1}^{\alpha_2-1}(\alpha_2z_{1,0}z_{2,0}^{\alpha_1}+\alpha z_{1,1}z_{2,1}^{\alpha_1})\frac{\partial}{\partial z_{2,0}}
-c_2z_{2,0}^{\alpha_1+1}(\alpha z_{1,0}z_{2,0}^{\alpha_2}+\alpha_1z_{1,0}z_{2,1}^{\alpha_2})\frac{\partial}{\partial z_{2,1}}.$$
Consider the quasi-smooth hypersurface on $\mathbb{P}^1\times\mathbb{P}^1$ invariant by $\mathcal{F}$ given by 
$$V=\left\{z_{1,0}z_{2,0}^{\alpha}+z_{1,1}z_{2,1}^{\alpha}+z_{1,0}z_{2,0}^{\alpha_1}z_{2,1}^{\alpha_2}=0\right\}.$$
Here, $V$ is invariant by $X_1$, $X_2$ and 
$\left\{0\right\} \subsetneq \mathrm{Sing} (V) = \mathbb{C}^{2}\times\left\{0\right\} \subset\mathcal{Z}$. 
Then, we have $\deg(V)=\deg(\mathcal{F})=(1,\alpha)$, and so 
$$\deg(V)_k \leq \deg(\mathcal{F})_k + 2,\,\,k=1, 2.$$ 
\end{exe}

Finally, the next example tells us that our hypotheses on $V$ are necessary are necessary

\begin{exe}\label{B}
In homogeneous coordinates $z_{1,0}, z_{1,1}, z_{2,0}, z_{2,1}$, let us consider the multiprojective space 
$$\mathbb{P}^1\times\mathbb{P}^1=\left(\mathbb{C}^{2}\times\mathbb{C}^{2}-\mathcal{Z}\right)\,/\,(\mathbb{C}^{*})^2,$$
where $\mathcal{Z} = \left(\left\{0\right\}\times\mathbb{C}^{2}\right)\cup\left(\mathbb{C}^{2}\times\left\{0\right\}\right)$.
Let $\mathcal{F}_1$ be a holomorphic foliation on $\mathbb{P}^1\times\mathbb{P}^1$ 
induced by the quasi-homogeneous vector field
$$X=z_{2,1}z_{1,0}^2\frac{\partial}{\partial z_{1,0}}+z_{1,1}z_{2,0}^2\frac{\partial}{\partial z_{2,0}}.$$
Consider the hypersurface on $\mathbb{P}^1\times\mathbb{P}^1$ invariant by $\mathcal{F}_1$ given by 
$$V=\left\{z_{1,0}^{n}z_{2,0}^{m}=0\right\}.$$
If $n,m>1$, then $\mathcal{Z} \subsetneq \mathrm{Sing}(V)
=\left(\left\{0\right\}\times\mathbb{C}\times\mathbb{C}\times\mathbb{C}\right)\cup
\left(\mathbb{C}\times\mathbb{C}\times\left\{0\right\}\times\mathbb{C}\right)$. We have $\deg(V)=(n,m)$ and  
$\deg(\mathcal{F}_1)=(1,1)$. Depending on $n$ and $m$, it is easy to see that the Poincar\'e problem could be or not could be valid. 

Similarly, let $\mathcal{F}_2$ be a holomorphic foliation on $\mathbb{P}^1\times\mathbb{P}^1$ induced by the quasi-homogeneous vector field
$$X=m z_{1,0}\frac{\partial}{\partial z_{1,0}}+n z_{2,0}\frac{\partial}{\partial z_{2,0}},$$
and consider the hypersurface on $\mathbb{P}^1\times\mathbb{P}^1$ 
invariant by $\mathcal{F}_2$ given by 
$$V=\left\{z_{1,0}^{n}z_{2,1}^{m}+z_{1,1}^{n}z_{2,0}^{m}=0\right\}.$$
Then $\deg(V)=(n,m)$ and $\deg(\mathcal{F}_2)=(0,0)$, and therefore we obtain the same conclusions. 
\end{exe}

%%%%%%%%%%%%%%%%%%%%%%%%%%%%%%%%%%%%%%%%%%%%%%%%%%%%%%%%%%%%%%%%%%%%%%%%%%%%%%%%%%%%%%%%%%%%%%%%%%%%%%%%%%%%%%%%%%%%%%%%%%%%%%%%%%%%%%%%%%%%%%%%%%%%%%%%%%%%%%%%%%%%%%%%%%%%%%%%%%%%%%%%%%%%%%%%%%%%%%%%%%%%%%%%%%%%%%%%%%%%%%%%%%%%%%%%%%%%%%%%%%%%%%%%%%%%%%%%%%%%%%%%%%%%%%%%%%%%%%%%%%%%%%%%
\bigskip

\noindent{\footnotesize \textsc{Acknowlegments.} I am grateful to Mauricio Corr\^ea for helpful comments and suggestions during the preparation of this paper and for interesting conversations about toric varieties. 
I also thank Arturo Fern\'andez P\'erez and the referee for useful comments and suggestions which improved the readability of this paper.}

%%%%%%%%%%%%%%%%%%%%%%%%%%%%%%%%%%%%%%%%%%%%%%%%%%%%%%%%%%%%%%%%%%%%%%%%%%%%%%%%%%%%%%%%%%%%%%%%%%%%%%%%%%%%%%%%%%%%%%%%%%%%%%%%%%%%%%%%%%%%%%%%

\end{document}